\newtheorem{theorem}{Theorem}[section]
\newtheorem{lemma}[theorem]{Lemma}
\newtheorem{proposition}[theorem]{Proposition}
\newtheorem{corollary}[theorem]{Corollary}
\theoremstyle{remark}
\newtheorem{definition}[theorem]{Definition}
\newcommand{\dl}{\nabla}
\newcommand{\les}{\lesssim}
\newcommand{\ges}{\gtrsim}
\newcommand{\mc}{\mathcal}
\newcommand{\be}{\begin{equation}}
\newcommand{\ee}{\end{equation}}
\newcommand{\ba}{\begin{array}}
\newcommand{\ea}{\end{array}}
\newcommand{\bpm}{\begin{pmatrix}}
\newcommand{\epm}{\end{pmatrix}}
\newcommand{\lb}{\label}
\DeclareMathOperator{\supp}{supp}
\newcommand{\ov}{\overline}
\newcommand{\dd}{{\,}{d}}
\newcommand{\R}{\mathbb R}
\title[Large Initial Data]{Large Initial Data Global Well-Posedness for a Supercritical Wave Equation}
\author{Marius Beceanu}
\address{University at Albany SUNY, Department of Mathematics and Statistics, Earth Science 110, Albany, NY, 12222, USA}
\email{mbeceanu@albany.edu}
\author{Avy Soffer}
\address{Rutgers University Department of Mathematics, 110 Frelinghuysen Rd., Piscataway, NJ, 08854, USA}
\email{soffer@math.rutgers.edu}
\subjclass[2010]{35L05, 35A01, 35B40, 35B33}
\begin{document}
\maketitle
\numberwithin{equation}{section}
\begin{abstract} We prove the existence of global solutions to the focusing energy-supercritical semilinear wave equation in $\R^{3+1}$ for arbitrary outgoing large initial data, after we modify the equation by projecting the nonlinearity on outgoing states.
\end{abstract}

\tableofcontents
\section{Introduction}
Consider the focusing semilinear wave equation on~$\R^{3+1}$
\be\lb{defocusing}
u_{tt}-\Delta u-|u|^N u =0,\ u(0)=u_0,\ u_t(0)=u_1.
\ee
This equation is $\dot H^{s_c}$-critical for $s_c=\frac 3 2 - \frac 2 N$, making it energy-supercritical for $N>4$.

An equivalent formulation of equation (\ref{defocusing}) is
$$
u(t)=\cos(t\sqrt{-\Delta})u_0 + \frac{\sin(t\sqrt{-\Delta})}{\sqrt{-\Delta}}u_1 + \int_0^t \frac{\sin((t-s)\sqrt{-\Delta})}{\sqrt{-\Delta}} (|u(s)|^Nu(s)) \dd s
$$
or in other words
$$
u(t)=\Phi_0(t)(u_0, u_1)+\int_0^t \Phi_0(t-s)(0, |u(s)|^Nu(s)) \dd s,
$$
where
$$\begin{aligned}
&\Phi(t)(u_0, u_1) = (\Phi_0(t)(u_0, u_1), \Phi_1(t)(u_0, u_1)) \\
&:= (\cos(t\sqrt{-\Delta}u_0 + \frac{\sin(t\sqrt{-\Delta})}{\sqrt{-\Delta}}u_1, -\sin(t\sqrt{-\Delta})\sqrt{-\Delta}u_0+\cos(t\sqrt{-\Delta})u_1)
\end{aligned}$$
is the flow of the linear wave equation in three dimensions.

Since this equation is hard to analyze, we consider instead a simpler model, where we project the nonlinearity on outgoing states first:
\be\lb{eq_outgoing}
u(t)=\Phi_0(t)(u_0, u_1)+\int_0^t \Phi_0(t-s)P_+(0, |u(s)|^Nu(s)) \dd s.
\ee
Here $P_+$ is the projection on outgoing states, see (\ref{outgoing}). In Section \ref{derivation} we obtain several more concrete equivalent formulations of equation (\ref{eq_outgoing}).

Equation (\ref{eq_outgoing}) has the same scaling as equation (\ref{defocusing}) and in particular is energy-supercritical for $N>4$.

All concrete formulations of equation (\ref{eq_outgoing}) --- see (\ref{eq_out}), (\ref{equivalent}), (\ref{eq_outgoing'}), and (\ref{eqn}) --- involve the nonlocal operator
$$
f(r) \mapsto \frac 1 r \int_0^r \rho f(\rho) \dd \rho.
$$
Therefore, the solution has infinite propagation speed and even if it starts with compact support it immediately extends to the whole of $\R^3$. In general, at times $t>0$ the best decay rate we can expect is $1/|x|$.

The reason why we take the focusing sign in equation (\ref{defocusing}) is that the projection on outgoing states $P_+$ changes the sign of the nonlinearity, so equation (\ref{eq_outgoing}) is in fact defocusing.

For equation (\ref{eq_outgoing}), it turns out that all sufficiently regular and outgoing (in the sense of Definition \ref{def_1}) initial data lead to global solutions forward in time, even when the equation is energy-supercritical, i.e.\ $N>4$.

We begin with a global existence result for finite energy bounded initial data.

\begin{theorem}\lb{main_thm} Assume $N \geq 2$ and consider radial outgoing (in the sense of Definition \ref{def_1}) initial data $(u_0, u_1) \in ((\dot H^1 \cap L^\infty) \times L^2)_{out}$. Then the corresponding solution $u$ to equation (\ref{eq_outgoing}) exists globally on $\R^3 \times [0, \infty)$ and
$$
\|u\|_{L^\infty_t \dot H^1_x} \les \|u_0\|_{\dot H^1},\ \|u(t)\|_{L^\infty} \les t^{-1/2} \|u_0\|_{\dot H^1},
$$
and
$$
\|u\|_{L^\infty_{t, x}} \les \|u_0\|_{L^\infty} + \|u_0\|_{\dot H^1}^{2} \|u_0\|_{L^\infty}^{(N-2)/2} + \|u_0\|_{\dot H^1} \|u_0\|_{L^\infty}^{N/2}.
$$
In addition, assuming that $N>4$,
$$\begin{aligned}
\|u\|_{L^2_t L^\infty_x} &\les \|u_0\|_{L^\infty} + (\ln_+ \|u_0\|_{\dot H^1} + \ln_+ \|u_0\|_{L^\infty} + 1)^{1/2} \|u_0\|_{\dot H^1}.
\end{aligned}$$
\end{theorem}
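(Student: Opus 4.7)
The plan is to exploit the outgoing radial structure to reduce equation~(\ref{eq_outgoing}) to a first-order transport problem along the outgoing characteristics $r-t=\mathrm{const}$. Setting $v(r,t)=ru(r,t)$ and invoking the equivalent formulations involving the nonlocal operator $f\mapsto r^{-1}\int_0^r\rho f(\rho)\,d\rho$, one obtains an integral equation in which, along each line $r=\rho+t$, $v$ satisfies an ODE schematically of the form $\dot w \sim -(\rho+t)^{-N}|w|^N w +(\text{nonlocal correction})$. The sign is crucial: the projection $P_+$ reverses the focusing sign of the original nonlinearity, so the ODE is effectively defocusing and preserves $|w|$ pointwise. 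Local existence by Picard iteration together with this pointwise control yields global forward existence.

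The bound $\|u\|_{L^\infty_t\dot H^1_x}\lesssim\|u_0\|_{\dot H^1}$ should follow from the natural energy identity for the projected equation, in which the defocusing sign makes the nonlinear potential nonnegative, so the linear energy is monotone and controlled by its initial value. For the dispersive estimate, radiality together with the Dirichlet condition $v(0,t)=0$ gives $|v(r,t)|\le \sqrt r\,\|v_r\|_{L^2}\lesssim \sqrt r\,\|u(t)\|_{\dot H^1}$, hence $|u(r,t)|\lesssim r^{-1/2}\|u_0\|_{\dot H^1}$; since the outgoing hypothesis combined with the outgoing projection keeps $v(\cdot,t)$ essentially supported in $r\gtrsim t$, one upgrades this to $\|u(t)\|_{L^\infty}\lesssim t^{-1/2}\|u_0\|_{\dot H^1}$. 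The global $L^\infty_{t,x}$ bound then comes from integrating the characteristic ODE and inserting the pointwise decay into the nonlinear source; the three terms on the right-hand side correspond respectively to the initial amplitude, to a quadratic self-interaction weighted by the $L^\infty$ concentration, and to the purely nonlinear contribution of order $N$.

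For the $L^2_tL^\infty_x$ estimate when $N>4$, I would split the time integral at a threshold $T$ to be optimized. On $[0,T]$, bound $\|u(t)\|_\infty$ by the $L^\infty_{t,x}$ quantity from the previous step, producing a factor $T^{1/2}\|u_0\|_{L^\infty}$ plus the higher-order tails; on $[T,\infty)$, use the dispersive bound to get a contribution $\lesssim \|u_0\|_{\dot H^1}^2 \int_T^\infty t^{-1}\,dt$. This integral is only logarithmically divergent, and the $\sqrt{\log}$ factor in the statement is precisely what one extracts by choosing $T$ of the form $(\|u_0\|_{\dot H^1}/\|u_0\|_{L^\infty})^2$. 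The residual divergence at $t\to\infty$ must be absorbed using supplementary monotone-quantity information that, for $N>4$, the supercriticality of the defocusing projected nonlinearity happens to provide, most plausibly through a Morawetz-type identity along outgoing characteristics yielding an integrable improvement of the $t^{-1/2}$ decay at large times.

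The hard part is the nonlocality of $P_+$: it rules out naive pointwise energy manipulations, and the technical heart of the argument is to identify a monotone functional for the projected equation that simultaneously encodes the defocusing character and the preservation of the outgoing structure under the nonlinear flow. A secondary difficulty is the region $r<t$, where the outgoing picture becomes degenerate and the $r^{-1/2}$ bound must be converted into the sharp $t^{-1/2}$ decay by a finer analysis along incoming characteristics, using that the nonlocal operator $r^{-1}\int_0^r\rho f(\rho)\,d\rho$ is well-behaved on the region where the outgoing Dirichlet condition at the origin dominates.
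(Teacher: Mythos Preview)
Your overall architecture is close to the paper's: the first-order outgoing reduction, the energy monotonicity giving $\|u\|_{L^\infty_t\dot H^1_x}\lesssim\|u_0\|_{\dot H^1}$, and the radial Sobolev inequality combined with the support property $\supp u(t)\subset\{r\ge t\}$ to obtain $\|u(t)\|_{L^\infty}\lesssim t^{-1/2}\|u_0\|_{\dot H^1}$ are all correct and are exactly what the paper uses. Two points, however, are either wrong or genuinely incomplete.

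First, the claim that along characteristics the ODE ``preserves $|w|$ pointwise'' is not justified: the nonlinear term in the outgoing first-order equation is $\frac{1}{2r}\int_0^r\rho|u(\rho)|^N u(\rho)\,d\rho$, which is nonlocal in $r$ and does not have a sign pointwise against $w$. The paper does not argue pointwise; it establishes an \emph{integral} monotone identity (the $E_0$ energy in Proposition~\ref{energy_bounds}) and uses an iterated local-existence scheme in $\dot H^1\cap L^\infty$, where the time step $\delta T\gtrsim\min(T_n^{(N-2)/2},T_n^{N/2})$ grows thanks to the $t^{-1/2}$ decay, so the iteration reaches $t=\infty$. Relatedly, your ``secondary difficulty'' in the region $r<t$ is not a difficulty at all: because both the initial data and the projected nonlinearity are outgoing, the solution is identically zero there, and no analysis along incoming characteristics is needed.

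Second, and more seriously, your $L^2_tL^\infty_x$ argument does not close. You correctly observe that $\int_T^\infty t^{-1}\,dt$ diverges, and then appeal to an unspecified Morawetz-type improvement to absorb the tail. The paper does something different and more concrete: once $T_n$ is large enough that $(T_n^{-(N-4)/2}+T_n^{-(N-2)/2})\|u_0\|_{\dot H^1}^N\ll 1$ (which requires $N>4$), the data at time $T_n$ satisfy the smallness condition of Proposition~\ref{local_existence}, and a \emph{small-data global Strichartz estimate} gives $\|u\|_{L^2_tL^\infty_x(\R^3\times[T_n,\infty))}\lesssim\|u_0\|_{\dot H^1}$ directly, with no logarithm. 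The logarithmic factor in the final bound comes only from the finite middle interval $[T_0,T_n]$, where one integrates $t^{-1}$ between two explicit endpoints depending on $\|u_0\|_{\dot H^1}$ and $\|u_0\|_{L^\infty}$. Without this small-data Strichartz input at large times, your splitting leaves an infinite tail that no amount of optimizing the threshold $T$ can remove.
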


We next state a low regularity global existence result that holds for $L^{N+2}$ initial data supported away from the origin.
\begin{theorem}\lb{Nexistence} Assume $N>2$ and consider radial and outgoing initial data $(u_0, u_1)$ with $u_0 \in L^{N+2}$ and $\supp u_0 \subset \ov{\R^3 \setminus B(0, R)}$ with $R>0$. Then the corresponding solution $u$ of (\ref{eq_outgoing}) exists globally on $\R^3 \times [0, \infty)$ and
\be\lb{uN}
\|u\|_{L^\infty_t L^{N+2}_x} \leq \|u_0\|_{L^{N+2}}.
\ee
In addition, if $N>4$ then $\|u(t)\|_{L^{N+2}} \to 0$ as $t \to \infty$.
\end{theorem}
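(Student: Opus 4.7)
The plan is to exploit the equivalent transport-type formulations of (\ref{eq_outgoing}) promised in Section \ref{derivation}, which for radial outgoing data reduce the problem to a first-order equation along outgoing characteristics $r(t) = r_0 + t$. Writing $v(t, r) = r\, u(t, r)$, and using that $P_+$ flips the sign of the focusing nonlinearity $|u|^N u$, the resulting equation should take the schematic form
$$
\dot v(t, r_0 + t) = -\frac{|v|^N v}{r^N} + (\text{nonlocal remainder}),
$$
with a defocusing leading term. Because $\supp u_0 \subset \ov{\R^3 \setminus B(0, R)}$ and the flow is outgoing, every characteristic issuing from the support stays in $\{r \geq R + t\}$, where $r^{-N}$ and the averaging operator $\frac{1}{r}\int_0^r \rho f(\rho) \dd \rho$ behave harmlessly.

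First I would prove local well-posedness in $L^{N+2}_x$ by a contraction mapping argument along these outgoing characteristics, using the support separation from the origin to bound the nonlocal averaging operator on the relevant subspace. Next I would establish the a priori bound (\ref{uN}) directly from the characteristic ODE: the defocusing leading term gives pointwise monotonicity $|v(t, r_0+t)| \leq |v_0(r_0)|$, and the change of variables $r = r_0 + t$ in
$$
\|u(t)\|_{L^{N+2}}^{N+2} = 4\pi \int_0^\infty \frac{|v(t, r)|^{N+2}}{r^N} \dd r = 4\pi \int_0^\infty \frac{|v(t, r_0+t)|^{N+2}}{(r_0+t)^N} \dd r_0,
$$
combined with $(r_0+t)^{-N} \leq r_0^{-N}$, then yields $\|u(t)\|_{L^{N+2}} \leq \|u_0\|_{L^{N+2}}$. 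Combining this a priori bound with local existence, global existence follows by standard continuation.

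For the decay when $N > 4$, the same representation gives
$$
\|u(t)\|_{L^{N+2}}^{N+2} \leq 4\pi \int_R^\infty \frac{|v_0(r_0)|^{N+2}}{(r_0+t)^N} \dd r_0,
$$
whose integrand tends to $0$ pointwise and is dominated by the integrable majorant $|v_0(r_0)|^{N+2}/r_0^N$; dominated convergence then gives $\|u(t)\|_{L^{N+2}} \to 0$. I expect the threshold $N > 4$ to enter only in making the nonlocal remainder in the characteristic ODE genuinely subleading relative to the local defocusing term, so that the monotonicity conclusion is strong enough to survive taking $t \to \infty$.

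The main obstacle will be controlling the nonlocal operator $\frac{1}{r}\int_0^r \rho f(\rho) \dd \rho$ at low $L^{N+2}$ regularity, since it is not bounded on $L^{N+2}(\R^3)$ near the origin. The support separation from $B(0, R)$ is essential both for the contraction step and for treating the nonlocal term perturbatively in the characteristic ODE; the rest of the argument is an exercise in turning the defocusing sign produced by $P_+$ into a pointwise ODE monotonicity.
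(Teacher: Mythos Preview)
Your overall architecture --- local existence in $L^{N+2}$ via contraction using the support separation, then an a priori $L^{N+2}$ bound, then continuation --- is exactly the paper's. The gap is in the a priori bound.

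The first-order equation for outgoing data (equation (\ref{eqn'}) in the paper) is
\[
u_t + u_r + \frac{u}{r} + \frac{1}{2r}\int_0^r \rho\,|u(\rho)|^N u(\rho)\,d\rho = 0,
\]
so with $v = ru$ one gets along each characteristic
\[
\dot v(t, r_0+t) = -\frac{1}{2}\int_0^{r_0+t} \frac{|v(\rho,t)|^N v(\rho,t)}{\rho^N}\,d\rho.
\]
There is \emph{no} local term $-|v|^N v/r^N$ here; the entire nonlinearity is the nonlocal integral. In particular there is no pointwise ODE mechanism forcing $|v(t,r_0+t)|\le|v_0(r_0)|$: for sign-changing $u_0$ the integral on the right need not share the sign of $v(t,r_0+t)$, and even for nonnegative data the integral stays positive at points where $v$ reaches zero, driving $v$ negative there. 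So the monotonicity you invoke, and hence the dominated-convergence decay argument built on it (which, note, would give decay for every $N>2$, already a red flag), does not follow.

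What the paper does instead is multiply (\ref{eqn'}) by $(N+2)|u|^N u$ and integrate over $\R^3$. The point is that the nonlocal term then becomes a perfect derivative in $r$:
\[
\int_{\R^3} |u|^N u \cdot \frac{1}{2r}\int_0^r \rho\,|u|^N u\,d\rho\,dx
= 2\pi\int_0^\infty \Big(\int_0^r \rho\,|u|^N u\,d\rho\Big)\,\partial_r\Big(\int_0^r \rho\,|u|^N u\,d\rho\Big)\,dr
= \frac{1}{16\pi}\Big(\int_{\R^3}\frac{|u|^N u}{|x|}\,dx\Big)^2 \ge 0,
\]
so after also integrating the $u_r$ term by parts one obtains the exact identity of Proposition \ref{Nconserv},
\[
\frac{d}{dt}\|u(t)\|_{L^{N+2}}^{N+2} = -N\int_{\R^3}\frac{|u|^{N+2}}{|x|}\,dx - \frac{N+2}{16\pi}\Big(\int_{\R^3}\frac{|u|^N u}{|x|}\,dx\Big)^2 \le 0.
\]
This is an \emph{integral} monotonicity, not a pointwise one, and it is where the ``defocusing'' effect of $P_+$ actually lives. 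With (\ref{ln}) in hand the paper iterates the local existence of Proposition \ref{LNexistence} exactly as you outlined. For the decay when $N>4$ the paper does not use pointwise control along characteristics; rather, since $\supp u(T_n)\subset\ov{\R^3\setminus B(0,R+T_n)}$ and $\|u(T_n)\|_{L^{N+2}}\le\|u_0\|_{L^{N+2}}$, the smallness condition $(R+T_n)^{\frac{4-N}{N+2}}\|u_0\|_{L^{N+2}}^N\ll 1$ is eventually met, after which the small-data part of Proposition \ref{LNexistence} and a dominated-convergence argument based on the explicit outgoing propagator (\ref{sol_outgoing}) finish the job.
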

Note that the solution $u(t)$ at time $t$ is supported on $\ov{\R^3\setminus B(0, R+t)}$. Since $\|u\|_{L^{N+2}}=\|r^{2/(N+2)} u(r)\|_{L^{N+2}_r([0, \infty))}$, we can rephrase inequality (\ref{uN}) as
$$
\|u(t)\|_{L^{N+2}_r([0,\infty))} \les (R+t)^{-2/(N+2)} \|u_0\|_{L^{N+2}}.
$$
This means that the solution disperses in a certain sense.

When dealing with data supported near the origin, we need more regularity even for local existence. As a corollary to Theorem \ref{Nexistence}, we obtain that a global solution always exists for $\dot H^{s_c} \times \dot H^{s_c-1}$ outgoing initial data.

\begin{corollary}\lb{optimal_existence} Assume that $N>2$ and $(u_0, u_1) \in (\dot H^{s_c} \cap L^{N+2} \times \dot H^{s_c-1})_{out}$ are radial and outgoing. Then the corresponding solution $u$ of (\ref{eq_outgoing}) exists globally on $\R^3 \times [0, \infty)$ and
$$
\|u\|_{L^\infty_t L^{N+2}_x} \leq \|u_0\|_{L^{N+2}}.
$$
In addition, if $N>4$ then $\|u(t)\|_{L^{N+2}} \to 0$ as $t \to \infty$.
\end{corollary}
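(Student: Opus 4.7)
The plan is to deduce this corollary from Theorem \ref{Nexistence} by approximating the initial data with outgoing data supported away from the origin and taking the limit.

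First I would construct a sequence of radial outgoing initial data $(u_0^{(n)}, u_1^{(n)})$, each supported in $\ov{\R^3 \setminus B(0, 1/n)}$, with $u_0^{(n)} \to u_0$ in $L^{N+2} \cap \dot H^{s_c}$ and $u_1^{(n)} \to u_1$ in $\dot H^{s_c - 1}$. For radial 3D data the outgoing condition (Definition \ref{def_1}) amounts to a single 1D profile $f$, in terms of which both $r u_0(r)$ and $r u_1(r)$ are expressible; cutting this profile smoothly off on $r \leq 1/n$ produces an outgoing approximant whose support lies in $\ov{\R^3 \setminus B(0, 1/n)}$. Convergence in the three listed norms follows from dominated convergence applied to the integrals on $B(0, 1/n)$.

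For each $n$, Theorem \ref{Nexistence} with $R = 1/n$ provides a global solution $u^{(n)}$ of (\ref{eq_outgoing}) with
$$
\|u^{(n)}\|_{L^\infty_t L^{N+2}_x} \leq \|u_0^{(n)}\|_{L^{N+2}} \leq \|u_0\|_{L^{N+2}},
$$
together with $\|u^{(n)}(t)\|_{L^{N+2}} \to 0$ as $t \to \infty$ when $N > 4$. By Banach--Alaoglu a subsequence $u^{(n_k)}$ converges weakly-$*$ in $L^\infty_t L^{N+2}_x$ to some $u$, which inherits the same bound by lower semicontinuity. To identify $u$ as a genuine solution of (\ref{eq_outgoing}) one must upgrade this to almost-everywhere convergence on compact subsets of $\R^3 \times [0, \infty)$, so as to pass to the limit inside the nonlinear Duhamel integrand. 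For $N > 4$, the decay $\|u(t)\|_{L^{N+2}} \to 0$ then follows by choosing, given $\varepsilon > 0$, an $n$ with $\|u_0 - u_0^{(n)}\|_{L^{N+2}} < \varepsilon$ and combining a stability estimate for $u - u^{(n)}$ with Theorem \ref{Nexistence}'s decay for $u^{(n)}$.

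The main obstacle is the compactness needed to pass to the limit in $|u^{(n_k)}|^N u^{(n_k)}$; weak-$*$ convergence in $L^\infty_t L^{N+2}_x$ by itself is not enough. The required strong or a.e.\ convergence should come from the uniform $\dot H^{s_c} \times \dot H^{s_c - 1}$ regularity of the approximate data (propagated at critical scaling using the $L^{N+2}$ control), combined with Rellich-type compactness and the smoothing produced by the nonlocal averaging $f(r) \mapsto \frac{1}{r} \int_0^r \rho f(\rho) \dd \rho$ that appears in every concrete formulation of (\ref{eq_outgoing}).
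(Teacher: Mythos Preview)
Your approach is genuinely different from the paper's, and the gap you flag at the end is real. The paper sidesteps that difficulty entirely by a much more direct argument that exploits the outgoing structure rather than approximation and compactness.

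The paper's proof runs as follows. Since $(u_0,u_1)\in \dot H^{s_c}\times\dot H^{s_c-1}$, the critical local well-posedness result (Proposition~\ref{well-posed}) gives a solution $u$ on some interval $[0,T]$ with $T>0$; adding the $L^{N+2}$ norm to that fixed point argument (and invoking the conservation law of Proposition~\ref{Nconserv} via approximation by smooth decaying solutions) yields $\|u\|_{L^\infty_t L^{N+2}_x(\R^3\times[0,T])}\le\|u_0\|_{L^{N+2}}$. The key observation is then that, because the data are outgoing, Proposition~\ref{well-posed} also guarantees that $u(T)$ is supported on $\ov{\R^3\setminus B(0,T)}$. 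Thus at time $T$ the solution has initial data supported away from the origin, and Theorem~\ref{Nexistence} applies directly with $R=T>0$ to continue $u$ globally on $[T,\infty)$. No limiting procedure in the nonlinearity is needed.

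Your route, by contrast, must pass to the limit in $|u^{(n)}|^N u^{(n)}$ inside the Duhamel term, and weak-$*$ convergence in $L^\infty_t L^{N+2}_x$ does not suffice. The ingredients you list (propagated $\dot H^{s_c}$ control, Rellich compactness, smoothing from the averaging operator) are plausible but not assembled into an actual argument; in particular, propagating uniform $\dot H^{s_c}$ bounds for the approximants using only the $L^{N+2}$ control is itself nontrivial at critical regularity, and your cutoff construction does not obviously converge in the fractional spaces $\dot H^{s_c}$ and $\dot H^{s_c-1}$ by dominated convergence alone. The paper's observation that the solution automatically moves away from the origin after any positive time is what makes all of this unnecessary.
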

This hypothesis is optimal from a scaling point of view, but certainly not optimal in terms of the number of derivatives. For outgoing solutions one has the Strichartz-type estimate
$$
\|\Phi_0(u_0, u_1)\|_{L^{N/2}_t L^\infty_x} \les \|u_0\|_{|x|^{-2/N-\epsilon}L^\infty_x \cap |x|^{-2/N+\epsilon}L^\infty_x}.
$$
Therefore we could take $|x|^{-2/N-\epsilon}L^\infty_x \cap |x|^{-2/N+\epsilon}L^\infty_x \cap L^{N+2}$ initial data.

Previous papers on the topic of supercritical wave equations include \cite{becsof}, \cite{bul1}, \cite{bul2}, \cite{bul3}, \cite{dkm}, \cite{kivi1}, \cite{kivi2}, \cite{krsc}, \cite{roy}, \cite{roy2}, \cite{struwe}, and \cite{tao}. For more details, the reader is referred to \cite{becsof}.

Our results are in keeping with the principle that, for radially symmetric solutions, blow-up can only occur at the origin, so it is precluded in our case due to the outgoing character of the equation (\ref{eq_outgoing}). In fact, for this reason, a higher power of the nonlinearity makes the equation easier to solve, since it means more decay.

The proofs are based on local existence results and on two scaling-subcritical conservation laws that we leverage in order to control the solution.

The hypothesis that the initial data are outgoing is in fact not necessary. This will be addressed in a future version of this paper.

Equation (\ref{eq_outgoing}) is a concrete example of an energy-supercritical dispersive equation, with no scaling-critical conserved quantities, that can be completely solved for arbitrary large initial data. In addition, studying this simplified model may lead to new insight concerning the original equation~(\ref{defocusing}).

The paper is organized as follows: in Section \ref{preliminary} we recall the definition of incoming and outgoing states and state some linear estimates. In Section \ref{derivation} we derive some alternative formulations of equation (\ref{eq_outgoing}). In Section \ref{local_ex} we prove some local existence results and small data global existence results. In Section \ref{conservation_laws} we show some conservation laws for the equation. Finally, in Section \ref{proof_main} we prove the main results stated in the introduction.

\section{Notations}
$A \les B$ means that $|A| \leq C |B|$ for some constant $C$. We denote various constants, not always the same, by $C$.

The Laplacian is the operator on $\R^3$ $\Delta=\frac {\partial^2}{\partial_{x_1}^2} + \frac {\partial^2}{\partial_{x_2}^2} + \frac {\partial^2}{\partial_{x_3}^2}$.

We denote by $L^p$ the Lebesgue spaces, by $\dot H^s$ and $\dot W^{s, p}$ (fractional) homogenous Sobolev spaces, and by $L^{p, q}$ Lorentz spaces. We also define the weighted Lebesgue spaces $w(x) L^p_x := \{w(x) f(x): f \in L^p\}$.

$\dot H^s$ are Hilbert spaces and so is $\dot H^1 \times L^2$, with the dot product
$$
\langle (u_0, u_1), (U_0, U_1) \rangle_{\dot H^1 \times L^2} = \int_{\R^3} \dl u_0(x) \dl \ov U_0(x) + u_1(x) \ov U_1(x) \dd x.
$$

For a radially symmetric function $u(x)$, we let $u(r):=u(x)$ for $|x|=r$.

By $(\dot H^1 \times L^2)_{out}$ we mean the space of outgoing radially symmetric $\dot H^1 \times L^2$ initial data, see Definition \ref{def_1}.

We define the mixed-norm spaces on $\R^3 \times [0, \infty)$
$$
L^p_t L^q_x := \Big\{f \mid \|f\|_{L^p_t L^q_x}:= \Big(\int_0^\infty \|f(x, t)\|_{L^q_x}^p \dd t\Big)^{1/p} < \infty \Big\},
$$
with the standard modification for $p=\infty$. 
Also, for $I \subset [0, \infty)$, let $\|f\|_{L^p_t L^q_x(\R^3 \times I)} := \|\chi_I(t) f\|_{L^p_t L^q_x}$, where $\chi_I$ is the characteristic function of~$I$.

We also denote $B(0, R):=\{x \in \R^3 \mid |x| \leq R\}$.


Let $\Phi(t):\dot H^1 \times L^2 \to \dot H^1 \times L^2$ be the flow of the linear wave equation in three dimensions: for
$$
u_{tt}-\Delta u=0,\ u(0)=u_0,\ u_t(0)=u_1,
$$
we set $\Phi(t)(u_0, u_1)=(\Phi_{0}(t)(u_0, u_1), \Phi_{1}(t)(u_0, u_1)):=(u(t), u_t(t))$.

\section{Preliminary estimates}\lb{preliminary}
Recall the standard Strichartz estimates for the free wave equation in three dimensions, see \cite{give}, \cite{keeltao}, and \cite{klma}:
\begin{lemma} Consider a solution $u$ of the free wave equation in dimension three:
$$
u_{tt}-\Delta u =0,\ u(0)=u_0,\ u_t(0)=u_1.
$$
If $(u_0, u_1) \in \dot H^1 \times L^2$ are radial, then
$$
\|u\|_{L^\infty_t \dot H^1_x \cap L^4_t \dot W^{1/2, 4}_x \cap L^2_t L^\infty_x} \les \|u_0\|_{\dot H^1} + \|u_1\|_{L^2}.
$$
\end{lemma}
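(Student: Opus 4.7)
The plan is to split the combined norm into its three pieces and handle each by a different standard tool. The $L^\infty_t \dot H^1_x$ part is just conservation of linear wave energy: multiplying $u_{tt}-\Delta u=0$ by $u_t$ and integrating by parts shows that $\|\dl u(t)\|_{L^2}^2 + \|u_t(t)\|_{L^2}^2$ is constant in $t$, so $\|u(t)\|_{\dot H^1} \leq \|u_0\|_{\dot H^1} + \|u_1\|_{L^2}$ for every $t$. This step uses no Strichartz machinery and no radial assumption.

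For the $L^4_t \dot W^{1/2,4}_x$ piece I would observe that $(q,r)=(4,4)$ is wave-admissible in dimension three, since $\tfrac1q+\tfrac{d-1}{2r}=\tfrac14+\tfrac14=\tfrac12=\tfrac{d-1}{4}$, and that the scaling condition $\tfrac1q+\tfrac{d}{r}=\tfrac{d}{2}-s$ yields exactly $s=\tfrac12$. Thus initial data in $\dot H^1\times L^2$ (which is one half-derivative above the scaling-critical regularity for $L^4_tL^4_x$) produce solutions controlled in $L^4_t \dot W^{1/2,4}_x$. This is a direct application of the Strichartz estimates of \cite{give}, \cite{keeltao}, and again needs no radial symmetry.

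The $L^2_t L^\infty_x$ piece corresponds to the Keel-Tao endpoint $(q,r)=(2,\infty)$ for the 3D wave equation, which is well known to fail for general initial data; this is the only step where the radial hypothesis is genuinely essential. The cleanest route is to pass to the one-dimensional variable $v(r,t):=ru(r,t)$ extended oddly through $r=0$: since $u$ is radial one checks that $v$ satisfies the 1D free wave equation $v_{tt}-v_{rr}=0$, so d'Alembert's formula expresses $u(r,t)$ pointwise as a sum of traveling-wave profiles built from $ru_0(r)$ and $\int_0^r \rho u_1(\rho)\dd\rho$. Squaring and integrating in $t$, and then applying Minkowski in $r$ together with a Hardy-type bound relating $ru_0$ to $\|u_0\|_{\dot H^1}$ for radial functions, yields the desired estimate. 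Alternatively, one may directly invoke the radial endpoint Strichartz estimate proved in \cite{klma}.

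The main obstacle is the $L^2_t L^\infty_x$ endpoint; the other two pieces are routine (energy conservation, and a textbook non-endpoint Strichartz). The endpoint is also the only place where the radial hypothesis plays any role.
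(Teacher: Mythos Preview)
Your proposal is correct and in fact goes beyond what the paper does: the paper states this lemma without proof, simply citing \cite{give}, \cite{keeltao}, and \cite{klma}, and then remarks that the endpoint $L^2_t L^\infty_x$ estimate in the radial case is due to \cite{klma}. Your three-part decomposition (energy conservation for $L^\infty_t\dot H^1_x$, non-endpoint Strichartz for $L^4_t\dot W^{1/2,4}_x$, and the radial endpoint via the one-dimensional reduction or directly via \cite{klma}) is exactly the content behind those citations, so there is nothing to compare---you have supplied the details the paper omits.
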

Note that in particular, as shown in \cite{klma}, the endpoint $L^2_t L^\infty_x$ Strichartz estimate is true in the radial case.


One of the main technical tools we use in this paper is a decomposition of all radial initial data into incoming and outgoing states. Intuitively, incoming states are those that move toward the origin, while outgoing states are the ones that move away from the origin.

To define incoming and outgoing states, we use the following lemma borrowed from \cite{becsof}:
\begin{lemma} There exist bounded operators $P_+$ and $P_-$ on $\dot H^1_{rad} \times L^2_{rad}$, given by
\be\lb{outgoing}\begin{aligned}
P_+(u_0, u_1) &=(P_{0+}(u_0, u_1), P_{1+}(u_0, u_1))\\
&:= \Big(\frac 1 2 \Big(u_0 - \frac 1 r \int_0^r \rho u_1(\rho) \dd \rho\Big), \frac 1 2 \big(-(u_0)_r-\frac {u_0} r + u_1\big)\Big)
\end{aligned}\ee
and
\be\lb{incoming}\begin{aligned}
P_-(u_0, u_1) &=(P_{0-}(u_0, u_1), P_{1-}(u_0, u_1))\\
&:= \Big(\frac 1 2 \Big(u_0 + \frac 1 r \int_0^r \rho u_1(\rho) \dd \rho\Big), \frac 1 2 \big((u_0)_r+\frac {u_0} r + u_1\big)\Big),
\end{aligned}\ee
such that $I=P_++P_-$, $P_+^2=P_+$, and $P_-^2=P_-$.
	
If $\Phi(t)$ is the flow of the linear equation then for $t \geq 0$ $P_- \Phi(t) P_+=0$ and for $t \leq 0$ $P_+ \Phi(t) P_-=0$. In addition, for $t \geq 0$ $\Phi(t)P_+(u_0, u_1)$ is supported on $\ov{\R^3\setminus B(0, t)}$ and for $t \leq 0$ $\Phi(t)P_-(u_0, u_1)$ is supported on $\ov{\R^3 \setminus B(0, -t)}$.
\end{lemma}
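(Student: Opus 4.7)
The plan is to reduce to the one-dimensional wave equation via the standard substitution $v(r,t):=ru(r,t)$, which transforms the radial 3D wave equation into $v_{tt}-v_{rr}=0$ on the half-line $r\geq 0$ with Dirichlet boundary condition $v(0,t)=0$. Under this substitution the energy space $\dot H^1_{rad}\times L^2_{rad}$ is isomorphic to the subspace of $\dot H^1(\R_+)\times L^2(\R_+)$ satisfying $v_0(0)=0$. By d'Alembert's formula combined with the boundary condition, every solution takes the form $v(r,t)=f(r-t)-f(-r-t)$ for a single function $f:\R\to\R$, uniquely determined by the data up to an additive constant that I will normalise by $f(0)=0$. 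Solving the initial-value system then yields
$$
f(r)=\tfrac{1}{2}\Big(ru_0(r)-\int_0^r\rho u_1(\rho)\dd\rho\Big)\ (r\geq 0),\quad f(-r)=-\tfrac{1}{2}\Big(ru_0(r)+\int_0^r\rho u_1(\rho)\dd\rho\Big)\ (r\geq 0).
$$

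I will then define the projections by splitting $f$ according to the sign of its argument: $P_+$ will correspond to the right-travelling component $f\chi_{[0,\infty)}$ and $P_-$ to $f\chi_{(-\infty,0)}$. Reading off the data $(U_0,U_1)$ from $v(r,0)=f(r)-f(-r)$ and $v_t(r,0)=-f'(r)+f'(-r)$ for each piece immediately reproduces the explicit formulas (\ref{outgoing}) and (\ref{incoming}). The identity $I=P_++P_-$ and the idempotence $P_\pm^2=P_\pm$ then reduce to $\chi_{[0,\infty)}+\chi_{(-\infty,0)}=1$ together with the observation (verified by a short direct computation) that the $f$ associated to $P_+(u_0,u_1)$ already equals $f\chi_{[0,\infty)}$. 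Boundedness on $\dot H^1_{rad}\times L^2_{rad}$ will follow from the trivial $\|f'\chi_{[0,\infty)}\|_{L^2(\R)}\leq\|f'\|_{L^2(\R)}$ combined with the equivalence $\|u_0\|_{\dot H^1}+\|u_1\|_{L^2}\approx\|f'\|_{L^2(\R)}$, which comes from $v_0'=(ru_0)'$ and $v_1=ru_1$.

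For the support and flow-intertwining properties, if $(u_0,u_1)=P_+(u_0,u_1)$ then $f$ is supported on $[0,\infty)$; for $t\geq 0$ and $r\geq 0$ the reflected term $-f(-r-t)$ vanishes since its argument is $\leq 0$, leaving $v(r,t)=f(r-t)$ supported on $r\geq t$. This yields the claimed support of $\Phi(t)P_+(u_0,u_1)$ in $\ov{\R^3\setminus B(0,t)}$. A brief computation shows that the $f$-profile associated to $\Phi(t)P_+(u_0,u_1)$ as new initial data is the translate $f(\,\cdot\,-t)$, still supported on $[0,\infty)$ when $t\geq 0$; hence the state remains outgoing and $P_-\Phi(t)P_+=0$, with the $t\leq 0$ statement for $P_-$ being symmetric. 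The main point of care I anticipate is the identification of 3D radial Sobolev norms with the 1D norms of $f$, which requires a mild Hardy-type argument, together with the normalisation $f(0)=0$, without which the $1/r$ factors in (\ref{outgoing})--(\ref{incoming}) would produce spurious singularities at the origin.
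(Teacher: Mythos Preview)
Your argument is correct and follows the standard route. The paper does not actually prove this lemma here; it is quoted from \cite{becsof}, and the proof there proceeds exactly as you outline: the substitution $v=ru$ reduces the radial 3D wave equation to $v_{tt}-v_{rr}=0$ on the half-line with $v(0,t)=0$, d'Alembert gives $v(r,t)=f(r-t)-f(-r-t)$, and $P_\pm$ are simply the restrictions of the profile $f$ to the two half-lines, from which $I=P_++P_-$, idempotence, the support statement, and $P_-\Phi(t)P_+=0$ are all immediate. Your anticipation of a Hardy-type step is on point: Hardy's inequality $\int_0^\infty u_0^2\,dr\lesssim\int_0^\infty r^2(u_0')^2\,dr$ is exactly what forces $v_0(0)=0$ and yields the norm equivalence $\|f'\|_{L^2(\R)}^2=\tfrac{1}{8\pi}(\|u_0\|_{\dot H^1}^2+\|u_1\|_{L^2}^2)$, and the paper records the same boundedness (in the form of the nonlocal-operator bound of the subsequent lemma) as the one nontrivial analytic input.
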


In addition, we use the following related fact: suppose that $P_+(u_0, u_1)=(u_0, u_1)$ and $(u_0, u_1)$ are supported on $\ov {\R^3\setminus B(0, R)}$; then $\Phi(t)(u_0, u_1)$ is supported on $\ov {\R^3\setminus B(0, R+t)}$ for $t \geq -R$.

\begin{definition}\lb{def_1} $P_+$ and $P_-$ are called the projection on outgoing, respectively incoming states. We call any radial $(u_0, u_1)$ such that $P_-(u_0, u_1)=0$ \emph{outgoing}; if $P_+(u_0, u_1)=0$ we call it \emph{incoming}.
\end{definition}

We next recall another lemma from \cite{becsof}, concerning the nonlocal operator that enters the definition of $P_+$ and $P_-$.
\begin{lemma}\lb{bounds} For radial $f \in L^2$
$$
\Big\|\frac 1 r \int_0^r \rho f(\rho) \dd \rho\Big\|_{\dot H^1_{rad}} \les \|f\|_{L^2_{rad}}.
$$
More generally, for $0 \leq s < 3/2$
\be\lb{embedding}
\Big\|\frac 1 r \int_0^r \rho f(\rho) \dd \rho\Big\|_{\dot H^{s+1}_{rad}} \les \|f\|_{\dot H^s_{rad}}.
\ee
Consequently, $P_+$ and $P_-$ are bounded on $\dot H^s \times \dot H^{s-1}$ for $1 \leq s < 3/2$.

Furthermore, if $(u_0, u_1)$ are purely outgoing or purely incoming, then $\|u_0\|_{\dot H^s} \sim \|u_1\|_{\dot H^{s-1}}$ for $1 \leq s < 3/2$.
\end{lemma}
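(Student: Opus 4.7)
The natural move is to substitute $F(r) := r f(r)$, under which the three-dimensional radial $L^2$ norm satisfies $\|f\|_{L^2(\R^3)}^2 = 4\pi \|F\|_{L^2(\R_+)}^2$, and the operator $Tf(r) := \frac{1}{r}\int_0^r \rho f(\rho)\dd\rho$ becomes antidifferentiation: setting $g = Tf$ and $G(r) := r g(r)$, one has $G(0) = 0$ and $G'(r) = F(r)$. The whole lemma is then a statement about how this antidifferentiation gains one degree of Sobolev regularity.

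For the base case $s = 0$, a direct computation gives $r g_r = F - G/r$, so
$$
\|g\|_{\dot H^1(\R^3)}^2 \;\sim\; \int_0^\infty (F - G/r)^2 \dd r \;\les\; \|F\|_{L^2(\R_+)}^2 + \|G/r\|_{L^2(\R_+)}^2,
$$
and the one-dimensional Hardy inequality $\|G/r\|_{L^2(\R_+)} \les \|G'\|_{L^2(\R_+)} = \|F\|_{L^2(\R_+)}$ (valid since $G(0)=0$) closes the estimate.

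For general $s \in (0, 3/2)$, I would interpolate between this endpoint and an integer endpoint at $s = 1$. The latter rests on the algebraic identity $\Delta g = (rf)'/r$ (equivalently $\Delta T = L$, with $L := \partial_r + 1/r$), yielding $\|g\|_{\dot H^2(\R^3)} \sim \|(rf)'\|_{L^2(\R_+)} \les \|rf_r\|_{L^2(\R_+)} + \|f\|_{L^2(\R_+)} \les \|f\|_{\dot H^1(\R^3)}$, where the last step uses the three-dimensional Hardy inequality. The main technical obstacle is the fractional range: either one checks the interpolation exponents carefully, or one passes to the Mellin transform in $r$, under which $T$ is multiplication by a bounded symbol and $\|\cdot\|_{\dot H^s(\R^3)_{rad}}$ is expressible as a weighted Mellin-Plancherel integral. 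The upper threshold $s < 3/2$ reflects the dimensional boundary at which the fractional Hardy-type bound $\|f/r\|_{\dot H^{s-1}(\R^3)} \les \|f\|_{\dot H^s(\R^3)}$ fails.

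The boundedness of $P_\pm$ on $\dot H^s \times \dot H^{s-1}$ for $1 \le s < 3/2$ then follows at once: the $u_1$ component enters through $Tu_1$, controlled by (\ref{embedding}) with parameter $s-1$, while the $u_0$ component contributes $(u_0)_r + u_0/r = (ru_0)'/r$, which loses exactly one derivative by a parallel argument. For the equivalence of norms in the outgoing case, $P_-(u_0, u_1) = 0$ yields $u_0 = -Tu_1$ from the first slot and $u_1 = -(u_0)_r - u_0/r$ from the second; the former gives $\|u_0\|_{\dot H^s} \les \|u_1\|_{\dot H^{s-1}}$ via (\ref{embedding}) and the latter gives the reverse bound. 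The incoming case is symmetric.
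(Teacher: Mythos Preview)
The paper does not prove this lemma; it is quoted from \cite{becsof} without argument, so there is no in-paper proof to compare against. On its own terms your sketch is correct for $s \in [0,1]$: the substitution $F = rf$, $G = rg$ with $G' = F$ reduces $s = 0$ to one-dimensional Hardy, the identity $\Delta T = L$ (where $L := \partial_r + 1/r$) together with three-dimensional Hardy handles $s = 1$, and interpolation fills in between. The deduction of the $P_\pm$-boundedness and of the norm equivalence from (\ref{embedding}) is also correct.

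The gap is the range $s \in (1, 3/2)$: interpolation between the endpoints $0$ and $1$ simply does not reach it, and the Mellin remark is not developed (Mellin diagonalizes dilations, not $-\Delta$, so expressing $\|\cdot\|_{\dot H^s(\R^3)_{rad}}$ as a Mellin--Plancherel integral is not immediate). This gap propagates to your consequences, since the bound $\|Lu_0\|_{\dot H^{s-1}} \les \|u_0\|_{\dot H^s}$ that you need for the second slot of $P_\pm$ and for the reverse inequality $\|u_1\|_{\dot H^{s-1}} \les \|u_0\|_{\dot H^s}$ is, via your own identity $\Delta T = L$, exactly (\ref{embedding}) at parameter $s$ --- the unproved range when $s > 1$. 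A cleaner route that covers all $s$ at once: verify the algebraic identity $L^2 = \Delta$ on radial functions (indeed $L(f_r) = f_{rr} + f_r/r$ and $L(f/r) = f_r/r$, so $L^2 f = f_{rr} + 2f_r/r$); then $L$ is anti-symmetric on radial $L^2(\R^3)$ with $L^*L = -\Delta$ and commutes with powers of $-\Delta$, which gives $\|Lf\|_{\dot H^{s-1}} = \|f\|_{\dot H^s}$ as an identity for nice $f$, and hence (\ref{embedding}) (in fact with equality) throughout the stated range.
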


We need one more property of this nonlocal operator.
\begin{lemma}\lb{nonlocal} Consider $f \in L^2_{rad} \cap L^\infty$. Then
$$
\Big\|\frac 1 r \int_0^r \rho f(\rho) \dd \rho\Big\|_{L^\infty} \les \|f\|_{L^2_{rad} \cap L^\infty}.
$$
\end{lemma}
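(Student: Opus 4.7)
Let $g(r) := \frac{1}{r}\int_0^r \rho f(\rho)\dd\rho$. The plan is to prove a uniform bound on $g$ by splitting into two regimes: small $r$, where the $L^\infty$ bound on $f$ is the natural tool, and large $r$, where the $L^2$ norm (with its $r^2$ radial weight) gives enough room via Cauchy--Schwarz.

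For $r \leq 1$, the trivial estimate $|\rho f(\rho)| \leq \rho \|f\|_{L^\infty}$ yields
$$
|g(r)| \leq \frac{1}{r}\int_0^r \rho \|f\|_{L^\infty}\dd\rho = \frac{r}{2}\|f\|_{L^\infty} \leq \frac{1}{2}\|f\|_{L^\infty}.
$$
For $r \geq 1$, I would use the fact that, since $f$ is radial, the three-dimensional norm $\|f\|_{L^2_{rad}}^2$ is comparable to $\int_0^\infty \rho^2 |f(\rho)|^2 \dd\rho$. Writing $\rho f(\rho) = 1 \cdot \rho f(\rho)$ and applying Cauchy--Schwarz in the variable $\rho$,
$$
\left|\int_0^r \rho f(\rho)\dd\rho\right| \leq \Big(\int_0^r 1\dd\rho\Big)^{1/2}\Big(\int_0^r \rho^2 |f(\rho)|^2\dd\rho\Big)^{1/2} \lesssim r^{1/2}\|f\|_{L^2_{rad}}.
$$
Dividing by $r$ gives $|g(r)| \lesssim r^{-1/2}\|f\|_{L^2_{rad}} \leq \|f\|_{L^2_{rad}}$ for $r \geq 1$.

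Combining the two regimes, $\|g\|_{L^\infty} \lesssim \|f\|_{L^\infty} + \|f\|_{L^2_{rad}} = \|f\|_{L^2_{rad}\cap L^\infty}$, which is the claim. There is no real obstacle here: the only subtlety is realizing that the $\rho^2$ weight built into the 3D radial $L^2$ norm is exactly what makes Cauchy--Schwarz win against the $r$ in the denominator for large $r$, while the $L^\infty$ bound trivially handles the regime where $r$ is small.
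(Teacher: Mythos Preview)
Your proof is correct and follows essentially the same idea as the paper: use the $L^\infty$ bound on $f$ to control the small-radius contribution and Cauchy--Schwarz together with the $\rho^2$ weight in the radial $L^2$ norm to control the large-radius contribution. The only cosmetic difference is that the paper first bounds $\frac{1}{r}\int_0^r \rho |f(\rho)|\,d\rho \le \int_0^r |f(\rho)|\,d\rho$ and then splits the inner integration variable $\rho$ at $1$, whereas you split the outer variable $r$ at $1$ and apply Cauchy--Schwarz directly to $\int_0^r \rho f(\rho)\,d\rho$; both routes give the same bound.
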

\begin{proof}
Clearly
$$
\Big|\frac 1 r \int_0^r \rho f(\rho) \dd \rho\Big| \les \int_0^r |f(\rho)| \dd \rho.
$$
Then
$$\begin{aligned}
&\int_0^r |f(\rho)| \dd \rho \leq \int_0^1 |f(\rho)| \dd \rho + \int_1^\infty |f(\rho)| \dd \rho \\
&\les \|f\|_{L^\infty} + \|f(\rho) \rho\|_{L^2([1, \infty))} \|1/\rho\|_{L^2([1, \infty))} \les \|f\|_{L^\infty} + \|f\|_{L^2_{rad}}.
\end{aligned}$$
\end{proof}

This inequality can be improved:
\begin{lemma}\lb{improved}
Consider a radial function $f$ on $\R^3$. Then
\be\lb{lp_bound}
\Big\|\frac 1 r \int_0^r \rho f(\rho) \dd \rho\Big\|_{L^\infty} \les \|f\|_{L^{3, 1}},\ \Big\|\frac 1 r \int_0^r \rho f(\rho) \dd \rho\Big\|_{L^{3, \infty}} \les \|f\|_{L^{3/2, 1}},
\ee
and for $3/2<p<3$
$$
\Big\|\frac 1 r \int_0^r \rho f(\rho) \dd \rho\Big\|_{L^{3p/(3-p)}} \les \|f\|_{L^p}.
$$
\end{lemma}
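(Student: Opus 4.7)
The plan is to derive all three estimates from two elementary pointwise bounds on $(Tf)(r) := \frac{1}{r}\int_0^r \rho f(\rho) \dd \rho$, combined with Hölder's inequality in Lorentz spaces and real interpolation.

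First, using $\rho/r \leq 1$ on $[0,r]$ and enlarging the domain of integration,
$$|(Tf)(r)| \leq \int_0^\infty |f(\rho)| \dd \rho, \qquad |(Tf)(r)| \leq \frac{1}{r}\int_0^\infty \rho |f(\rho)| \dd \rho.$$
I would then rewrite these one-dimensional integrals as weighted integrals on $\R^3$ via
$$\int_0^\infty |f(\rho)| \dd \rho = \frac{1}{4\pi}\int_{\R^3} |f(x)|\, |x|^{-2} \dd x, \qquad \int_0^\infty \rho |f(\rho)| \dd \rho = \frac{1}{4\pi}\int_{\R^3} |f(x)|\, |x|^{-1} \dd x.$$
Invoking Hölder's inequality in Lorentz spaces together with the standard facts $|x|^{-2} \in L^{3/2,\infty}(\R^3)$ and $|x|^{-1} \in L^{3,\infty}(\R^3)$ then gives
$$|(Tf)(r)| \les \|f\|_{L^{3,1}}, \qquad |(Tf)(r)| \les \|f\|_{L^{3/2,1}}/r.$$
The first bound is precisely the first claim of (\ref{lp_bound}); the second, combined once more with $\||x|^{-1}\|_{L^{3,\infty}} < \infty$, gives the second claim.

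For the third bound in the range $3/2 < p < 3$, I would apply real interpolation to the two endpoint mappings $T: L^{3,1} \to L^\infty$ and $T: L^{3/2,1} \to L^{3,\infty}$ just established. The standard real-interpolation identities for Lorentz spaces deliver, for each $p \in (3/2, 3)$, a bounded map $T: L^p \to L^{r, p}$ with $r = 3p/(3-p)$, and the continuous embedding $L^{r, p} \hookrightarrow L^r$ (valid because $p < r$) concludes the argument.

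The main item requiring care is the interpolation bookkeeping, namely tracking the Lorentz indices so that the resulting target space actually embeds into $L^r$. There is no genuine analytical obstacle, as the operator in question behaves essentially like a Riesz potential of order one on radial functions, for which the mapping properties above are the natural Sobolev-type bounds.
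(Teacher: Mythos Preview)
Your proof is correct and follows essentially the same approach as the paper's: both rewrite the operator as an integral over $\R^3$ against the weights $|y|^{-1}$ and $|y|^{-2}$, use the Lorentz-space H\"older inequality with $|y|^{-1}\in L^{3,\infty}$ and $|y|^{-2}\in L^{3/2,\infty}$ to obtain the two endpoint bounds in (\ref{lp_bound}), and then invoke real interpolation for the intermediate range. The only cosmetic difference is that the paper keeps the restriction $|y|\le |x|$ in the integrals while you enlarge to all of $\R^3$; your interpolation bookkeeping (landing in $L^{3p/(3-p),p}\hookrightarrow L^{3p/(3-p)}$) is exactly what the paper alludes to in its closing remark.
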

This estimate means there is no need to take $L^\infty$ initial data in our equation (\ref{eq_outgoing}) --- $L^p$ with $p$ sufficiently large is sufficient --- but for simplicity we choose not to pursue this idea in this paper.
\begin{proof}[Proof of Lemma \ref{improved}] We rewrite this nonlocal operator as
$$
\frac 1 {4\pi|x|} \int_{|y| \leq |x|} \frac {f(y)}{|y|} \dd y \leq \frac 1 {4\pi} \int_{|y| \leq |x|} \frac {|f(y)|}{|y|^2} \dd y.
$$

The two estimates (\ref{lp_bound}) follow immediately from these representations and from the fact that $\frac 1 {|y|} \in L^{3, \infty}$ (which pairs with $L^{3/2, 1}$) and $\frac 1 {|y|^2} \in L^{3/2, \infty}$ (which pairs with $L^{3, 1}$).

The remaining estimate follows by real interpolation (see Theorem 5.3.1 in \cite{bergh}). Iin fact, $L^{3p/(3-p)}$ can be further improved to $L^{3p/(3-p), p}$.
\end{proof}

We are also interested in estimates that hold only for $f$ supported away from zero.
\begin{lemma}\lb{awayfromzero} Consider a radial function $f$ on $\R^3$ such that $\supp f \subset \ov{\R^3 \setminus B(0, R)}$, where $R>0$, and suppose $1 \leq p \leq 2$. Then
$$
\Big\|\frac 1 r \int_0^r \rho f(\rho) \dd \rho \Big\|_{L^\infty} \les R^{1-3/p} \|f\|_{L^p},\ \Big\|\frac 1 r \int_0^r \rho f(\rho) \dd \rho \Big\|_{L^{3p, \infty}} \les R^{1-2/p} \|f\|_{L^p},
$$
and for $3p<q \leq \infty$
$$
\Big\|\frac 1 r \int_0^r \rho f(\rho) \dd \rho \Big\|_{L^q} \les R^{1-3/p+3/q} \|f\|_{L^p}.
$$
\end{lemma}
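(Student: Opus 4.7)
The plan is to follow the same strategy as the proof of Lemma \ref{improved} --- rewriting the operator in three-dimensional form, applying H\"older's inequality, and exploiting the support condition on $f$ --- but now carefully tracking the $R$-dependence. First, as in Lemma \ref{improved}, I would write
\[
\frac{1}{r}\int_0^r \rho f(\rho)\dd\rho = \frac{1}{4\pi|x|}\int_{|y|\leq|x|}\frac{f(y)}{|y|}\dd y,
\]
and use $\supp f \subset \ov{\R^3 \setminus B(0,R)}$ to restrict the integration to the annulus $\{R\leq|y|\leq|x|\}$; in particular the output vanishes for $|x|<R$. Call this output $F(x)$.

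Next I would apply H\"older's inequality in $\R^3$ to get
\[
|F(x)| \les \frac{1}{|x|}\,\|f\|_{L^p}\,\Big\|\mathbf{1}_{R\le|y|\le|x|}\,|y|^{-1}\Big\|_{L^{p'}},
\]
and evaluate $\int_R^{|x|} s^{2-p'}\dd s$ by hand. The natural case split is according to whether $p'>3$ or $p'<3$: for $1\leq p<3/2$ (so $p'>3$) the radial integral converges at the upper endpoint and is bounded by $R^{3-p'}$, yielding $|F(x)| \les |x|^{-1}R^{2-3/p}\|f\|_{L^p}$; for $3/2<p\leq 2$ (so $p'<3$) it is bounded instead by $|x|^{3-p'}$, yielding $|F(x)| \les |x|^{1-3/p}\|f\|_{L^p}$. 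The borderline case $p=3/2$ can be handled directly by noting $\sup_{r\ge R} r^{-1}(\ln(r/R))^{1/3} \les R^{-1}$, or bypassed by interpolation from neighboring values of $p$.

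The $L^\infty$ bound then follows immediately from each pointwise estimate together with $|x|\ge R$ on the support of $F$ and $1-3/p\le 0$ for $p\le 2$: both regimes yield $\|F\|_{L^\infty} \les R^{1-3/p}\|f\|_{L^p}$. For the $L^{3p,\infty}$ bound I would compute the Lorentz norm of the weights $|x|^{-1}\mathbf{1}_{|x|\geq R}$ (first regime) and $|x|^{1-3/p}\mathbf{1}_{|x|\geq R}$ (second regime) directly from the distribution function; both, combined with the $R$-factor already extracted, produce $R^{1-2/p}\|f\|_{L^p}$. Finally, for $3p<q\leq\infty$ the estimate follows by real interpolation between $L^{3p,\infty}$ and $L^\infty$, since the exponents combine as $(1-2/p)(3p/q)+(1-3/p)(1-3p/q) = 1-3/p+3/q$.

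The main obstacle I expect is really the bookkeeping of the two regimes $p<3/2$ and $p>3/2$ together with the borderline $p=3/2$; no individual step is deep, but the two cases require slightly different arguments and have to be matched up at the end to produce a uniform statement across $1\leq p\leq 2$.
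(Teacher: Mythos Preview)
Your proposal is correct, but the paper's argument is considerably shorter and avoids the case split entirely. Instead of passing to the three-dimensional integral and applying H\"older with the weight $|y|^{-1}$, the paper stays in the one-dimensional radial variable and splits the weight as $\rho = \rho^{1-2/p}\cdot \rho^{2/p}$; since $\rho \geq R$ on the support of $f$ and $1-2/p \leq 0$ for $p\leq 2$, this immediately extracts a factor $R^{1-2/p}$, after which a single application of H\"older in $L^p_r([0,\infty))$ (using that $\|f\|_{L^p(\R^3)}$ is equivalent to $\|\rho^{2/p}f(\rho)\|_{L^p_r}$) gives the uniform pointwise bound $|F(r)| \les R^{1-2/p} r^{-1/p}\|f\|_{L^p}$. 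Both the $L^\infty$ and the $L^{3p,\infty}$ estimates are then read off directly from $r^{-1/p}$ (since $r^{-1/p}\leq R^{-1/p}$ on the support and $r^{-1/p}\in L^{3p,\infty}$), and the $L^q$ bound follows by interpolation just as in your plan. The advantage of the paper's route is that it produces a single pointwise estimate valid for all $1\leq p\leq 2$, so there is no need to treat $p<3/2$, $p>3/2$, and $p=3/2$ separately or to compute Lorentz norms of two different weights; your approach, on the other hand, is the natural continuation of the method of Lemma~\ref{improved} and makes the connection to that lemma more transparent.
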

\begin{proof}[Proof of Lemma \ref{awayfromzero}] Since $f$ is radial, $f \in L^p$ is equivalent to $\rho^{2/p} f(\rho) \in L^p([0, \infty))$. Then
$$\begin{aligned}
& \frac 1 r \int_0^r \rho f(\rho) \dd \rho \les \frac {R^{1-2/p}} r \int_0^r \rho^{2/p} |f(\rho)| \dd \rho \\
& \les \frac {R^{1-2/p}} r (r-R)^{1-1/p} \|\rho^{2/p} f(\rho)\|_{L^p([0, \infty))} \les R^{1-2/p} r^{-1/p} \|f\|_{L^p}.
\end{aligned}$$
The first two conclusions then follow immediately; note that $r^{-1/p} \in L^{3p, \infty}$ and that $r^{-1/p} \leq R^{-1/p}$ on the domain of $f$. The third conclusion then follows by interpolation.
\end{proof}

We next prove a simple dispersive estimate for outgoing solutions.
\begin{lemma} Let $u$ be the solution of the linear wave equation
$$
u_{tt}-\Delta u =0,\ u(0)=u_0,\ u_t(0)=u_1,
$$
with outgoing initial data $(u_0, u_1)$. Then for $0 \leq s < 3/2$ $\|u(x, t)\|_{\dot H^s_x} \les \|u_0\|_{\dot H^s_x}$ and for $1/2 \leq s<3/2$ and $t \geq 0$ $\|u(x, t)\|_{L^\infty_x} \les t^{s-3/2} \|u_0\|_{\dot H^s}$.
\end{lemma}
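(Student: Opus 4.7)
Plan:

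The argument combines the $\dot H^s \times \dot H^{s-1}$ conservation of the free wave flow with the explicit representation of outgoing radial solutions. Writing $v = ru$ reduces the radial 3D wave equation to $v_{tt} = v_{rr}$ on $[0, \infty)$ with Dirichlet condition $v(0, t) = 0$; the outgoing condition $P_-(u_0, u_1) = 0$ translates into $v_1 = -v_0'$ with $v_0 = r u_0$, i.e.\ a purely right-moving profile. Together with the support property that $\Phi(t) P_+(u_0, u_1)$ is supported on $\ov{\R^3 \setminus B(0, t)}$, this yields, for $t \ge 0$,
$$
u(r, t) = \frac{r - t}{r}\,u_0(r - t) \quad (r \ge t), \qquad u(r, t) = 0 \quad (r < t).
$$

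For the first bound, conservation of the $\dot H^s \times \dot H^{s-1}$ norm together with the equivalence $\|u_1\|_{\dot H^{s-1}} \sim \|u_0\|_{\dot H^s}$ from Lemma \ref{bounds} gives the estimate in the range $1 \le s < 3/2$. For $0 \le s \le 1$, a direct change of variables $\rho = r - t$ in the explicit formula produces $\|u(\cdot, t)\|_{L^2} = \|u_0\|_{L^2}$; at $s = 1$, energy conservation together with Hardy's inequality $\|u_0/r\|_{L^2} \les \|u_0\|_{\dot H^1}$ controls $\|u_1\|_{L^2}$ by $\|u_0\|_{\dot H^1}$. Complex interpolation then fills in $0 < s < 1$.

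For the dispersive bound, the radial Sobolev (Strauss) inequality $|u_0(\rho)| \les \rho^{s - 3/2} \|u_0\|_{\dot H^s}$, valid for radial $u_0$ on $\R^3$ and $1/2 < s < 3/2$, inserted into the explicit formula gives, for $r \ge t$,
$$
|u(r, t)| \le \frac{r - t}{r}\,|u_0(r - t)| \les \frac{(r - t)^{s - 1/2}}{r}\,\|u_0\|_{\dot H^s}.
$$
Optimizing the function $\rho \mapsto \rho^{s - 1/2}/(\rho + t)$ over $\rho = r - t \ge 0$ --- the critical point being $\rho_{\ast} = (s - 1/2)t/(3/2 - s)$ --- yields the maximum $\sim t^{s - 3/2} \|u_0\|_{\dot H^s}$.

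The main obstacle is the endpoint $s = 1/2$ of the dispersive bound, which sits exactly on the borderline of the 3D radial Sobolev embedding. I would handle it by observing that $r\, u(r, t) = v_0(r - t)$ on the support $\{r \ge t\}$, so $|u(r, t)| \le t^{-1}\|v_0\|_{L^\infty(\R)}$, and then controlling $\|v_0\|_{L^\infty(\R)}$ (after odd extension) via a critical 1D embedding: for instance, the Besov refinement $\|v_0\|_{L^\infty} \les \|v_0\|_{\dot B^{1/2}_{2, 1}}$ combined with the equivalence of the 1D $\dot H^{1/2}$ norm of $v_0$ with the 3D $\dot H^{1/2}$ norm of $u_0$ for radial outgoing data (which follows by interpolating the exact identities at $s = 0$ and $s = 1$).
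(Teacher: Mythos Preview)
Your argument is correct, but the paper's proof is more direct on both counts. For the $\dot H^s$ bound the paper simply invokes the $\dot H^s\times\dot H^{s-1}$ boundedness of the free flow together with the outgoing relation $u_1=(u_0)_r+u_0/r$ and Hardy's inequality to conclude $\|u_1\|_{\dot H^{s-1}}\les\|u_0\|_{\dot H^s}$ in one stroke, without splitting the $s$-range or interpolating. The larger difference is in the dispersive bound: rather than inserting the radial Sobolev inequality for $u_0$ into the explicit formula and then optimizing $\rho^{s-1/2}/(\rho+t)$, the paper applies the radial Sobolev embedding $|f(r)|\les r^{s-3/2}\|f\|_{\dot H^s}$ directly to $f=u(\cdot,t)$ (which lies in $\dot H^s$ by the first part) and uses only the support property $\supp u(\cdot,t)\subset\ov{\R^3\setminus B(0,t)}$ to obtain $|u(r,t)|\les t^{s-3/2}\|u_0\|_{\dot H^s}$ with no further computation. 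Your route via the explicit formula is valid and even gives a sharper pointwise profile in $r$, but the paper's argument is shorter and shows that only the support information --- not the precise structure of the outgoing solution --- is needed. As for the endpoint $s=1/2$, the paper's proof likewise does not cover it (the radial Sobolev embedding fails there) and relegates it to a remark suggesting Besov spaces, exactly as you do.
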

The endpoint $t^{-1}$ decay can also be achieved by e.g.\ using Besov spaces and interpolation (or, more simply, the inhomogenous $H^1$ norm). More interestingly, the $\dot H^s$ norms can be replaced by weighted $L^\infty$ norms.
\begin{proof} The first inequality follows because
$$
\|u(x, t)\|_{\dot H^s_x} \les \|u_0\|_{\dot H^s} + \|u_1\|_{\dot H^{s-1}} \les \|u_0\|_{\dot H^s}
$$
by Hardy's inequality (since $u_1=(u_0)_r+\frac{u_0}r$). The second inequality follows by the radial Sobolev embedding
$$
|u(r)| \les r^{s-3/2} \|u\|_{\dot H^s}
$$
because at time $t \geq 0$ the solution $u$ is supported on $\ov{\R^3\setminus B(0, t)}$.
\end{proof}

We now state some special identities that hold for outgoing solutions only, which show the improvements that occur compared to the general case.
\begin{lemma} Assume that $u$ is a smooth, compactly supported, and outgoing solution to the linear wave equation for $t \geq 0$:
$$
u_{tt}-\Delta u =0,\ u(0)=u_0,\ u_t(0)=u_1.
$$
Then
\be\lb{ident}
\int_{\R^3 \times \{T\}} |u|^n \dd x = \int_{\R^3} |u_0|^n \dd x - (n-2) \int_0^T \int_{\R^3} \frac {|u|^n}{|x|} \dd x \dd t.
\ee
\end{lemma}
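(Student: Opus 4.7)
The plan is to reduce the problem to a one–dimensional transport identity via the standard radial substitution $v(r,t) = r\, u(r,t)$. Under this substitution the 3D radial wave equation becomes $v_{tt} - v_{rr} = 0$ on $(0,\infty) \times \R$, and the outgoing hypothesis $P_-(u_0,u_1)=0$ forces $v$ to be a pure right-mover, equivalently the first-order relation $v_t + v_r = 0$. One checks this at $t=0$ directly from the formula for $P_{1+}$: the outgoing condition gives $u_1 = -(u_0)_r - u_0/r$, i.e.\ $v_t|_{t=0} = -v_r|_{t=0}$, and this propagates in time since both sides satisfy $v_{tt}-v_{rr}=0$.

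With this reduction in hand, I would differentiate in time and pass to radial coordinates,
\[
\frac{d}{dt} \int_{\R^3} |u|^n \dd x = 4\pi n \int_0^\infty |u|^{n-2} \Re(u \, \bar u_t) \, r^2 \dd r.
\]
Writing $u = v/r$, $u_t = v_t/r$ and using $v_t = -v_r$ converts the integrand on the right-hand side to $-\partial_r(|v|^n) \, r^{2-n}$. Integration by parts in $r$ then produces a volume term plus boundary contributions at $r=0$ and $r=\infty$. The boundary contribution at infinity vanishes by the compact support assumption, while at the origin $|v(r,t)|^n r^{2-n} = O(r^2)$ because $u$ is smooth and radial on $\R^3$, so $v = ru$ vanishes linearly at $r=0$. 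What remains is
\[
4\pi(2-n) \int_0^\infty |v|^n \, r^{1-n} \dd r = -(n-2) \int_{\R^3} \frac{|u|^n}{|x|} \dd x,
\]
after substituting back $|v|^n = r^n |u|^n$ and recognizing the spherical measure. Integrating from $0$ to $T$ in $t$ then gives (\ref{ident}).

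The step I would want to verify most carefully is the passage from the outgoing condition to the pointwise transport identity $v_t = -v_r$ (as opposed to merely the orthogonality encoded in $P_-=0$); once this is in place, the rest of the argument is chain rule, one integration by parts, and the boundary analysis above. Conceptually, the identity says that on outgoing radial solutions the 3D wave equation behaves like a 1D transport, and the prefactor $(n-2)$ arises from the mismatch between the spherical Jacobian $r^2$ and the $r^n$ scaling coming from $|v|^n = r^n |u|^n$.
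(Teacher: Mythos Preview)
Your argument is correct and is essentially the paper's proof repackaged: the paper works directly with the outgoing relation $u_t+u_r+u/r=0$ (which is exactly your $v_t+v_r=0$ after the substitution $v=ru$), substitutes into $\frac{d}{dt}\int|u|^n\,dx$, and performs one radial integration by parts to produce the factor $(n-2)$. Your concern about passing from $P_-(u_0,u_1)=0$ to the pointwise transport identity is resolved in the paper simply by taking this relation as the working definition of outgoing; your propagation argument via the 1D wave equation is a valid alternative justification.
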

In particular, for $n>2$
$$
\int_{\R^3 \times \R} \frac {|u|^n}{|x|} \dd x \dd t \les \|u_0\|_{L^n}^n.
$$
\begin{proof}
The solution $u$ being outgoing means that $u_t+u_r+u/r=0$. Note that
$$
\frac d {dt} \int_{\R^3\times\{t\}} |u|^n \dd x = \int_{\R^3 \times \{t\}} n |u|^{n-2} u u_t \dd x = -\int_{\R^3 \times \{t\}} n |u|^{n-2} u u_r + n \frac{|u|^n}{|x|} \dd x
$$
and
\be\lb{parts}\begin{aligned}
&\int_{\R^3 \times \{t\}} n |u|^{n-2} u u_r \dd x = 4\pi \int_0^\infty n |u|^{n-2} u u_r r^2 \dd r \\
&= 4\pi (|u|^n r^2) \mid_0^\infty - 4\pi \int_0^\infty |u|^n 2r \dd r = -2 \int_{\R^3 \times\{t\}} \frac {|u|^n}{|x|} \dd x.
\end{aligned}\ee
Integrating in $t$ we obtain (\ref{ident}). The other conclusion is now obvious.
\end{proof}

Finally, we summarize some simple, but important results from \cite{becsof}.
\begin{proposition}\lb{est_outgoing} Consider a solution $u$ to the linear wave equation on $\R^{3+1}$ with outgoing initial data $(u_0, u_1)$:
$$
u_{tt}-\Delta u=0,\ u(0)=u_0,\ u_t(0)=u_1.
$$
Then for $0 \leq t \leq r$
\be\lb{sol_outgoing}
u(r, t)=\frac {r-t} r u_0(r-t)
\ee
and $u(r, t)=0$ for $0 \leq r \leq t$. Therefore
\be\lb{linf}
\|u\|_{L^\infty_{t, x}} \leq \|u_0\|_{L^\infty}.
\ee
More generally, for $2 \leq p \leq \infty$ $\|u\|_{L^\infty_t L^p_x} \leq \|u_0\|_{L^p}$.
\end{proposition}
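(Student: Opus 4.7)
The plan is to reduce to the 1D wave equation by setting $v(r,t) := r u(r,t)$, which satisfies $v_{tt} - v_{rr} = 0$ on $r \geq 0$ with Dirichlet condition $v(0,t) = 0$; extending $v$ as an odd function of $r \in \R$ turns this into the free 1D problem on all of $\R$. The outgoing hypothesis $P_-(u_0,u_1)=0$, unpacked from (\ref{incoming}), gives $u_1 = -(u_0)_r - u_0/r$, so after multiplying by $r$ we obtain $v_t(r,0) = -(r u_0)_r = -v_r(r,0)$ for $r > 0$, and by odd symmetry $v_t(r,0) = v_r(r,0)$ for $r < 0$. Also $v_0(0) = 0$ since $v_0(r) = r u_0(r)$, a fact that will drive the inner-cone cancellation below.

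I then apply d'Alembert's formula
$$
v(r,t) = \tfrac 1 2 [v_0(r+t) + v_0(r-t)] + \tfrac 1 2 \int_{r-t}^{r+t} v_t(s,0) \dd s
$$
in two cases. When $r \geq t$, both $r \pm t \geq 0$, so $v_t(\cdot, 0) = -v_0'$ throughout the interval; the integral evaluates to $v_0(r-t) - v_0(r+t)$ and d'Alembert collapses to $v(r,t) = v_0(r-t) = (r-t) u_0(r-t)$, yielding (\ref{sol_outgoing}). When $0 \leq r \leq t$, the interval straddles the origin; splitting at $s = 0$ and applying $v_t = v_r$ on the left half and $v_t = -v_r$ on the right half, the two pieces contribute $v_0(0) - v_0(r-t)$ and $v_0(0) - v_0(r+t)$ respectively, summing to $-v_0(r-t) - v_0(r+t)$ thanks to $v_0(0) = 0$. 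This cancels the boundary term exactly and forces $v(r,t) = 0$, hence $u(r,t) = 0$ on the inner cone.

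The $L^\infty$ bound (\ref{linf}) follows at once from $|u(r,t)| = ((r-t)/r) |u_0(r-t)| \leq |u_0(r-t)|$ for $r \geq t$ and $u \equiv 0$ otherwise. For $2 \leq p \leq \infty$, substituting $s = r - t$ yields
$$
\|u(\cdot,t)\|_{L^p_x}^p = 4\pi \int_0^\infty s^p (s+t)^{2-p} |u_0(s)|^p \dd s \leq 4\pi \int_0^\infty s^2 |u_0(s)|^p \dd s = \|u_0\|_{L^p_x}^p,
$$
using $2 - p \leq 0$ and $s + t \geq s$. The main technical point is the inner-cone cancellation: it rests on the single identity $v_0(0) = 0$ combined with the sign flip $v_t = \mp v_r$ across $r = 0$, and this is precisely the mechanism by which purely outgoing waves leave the interior undisturbed for all positive times; everything else is bookkeeping for d'Alembert's formula and a change of variable.
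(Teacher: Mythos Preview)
Your proof is correct. The paper itself does not prove this proposition---it is quoted from the companion paper \cite{becsof}---but your argument via the substitution $v(r,t)=ru(r,t)$, odd extension, and d'Alembert's formula is exactly the standard route and almost certainly what appears there; the inner-cone cancellation from $v_0(0)=0$ together with the sign flip of $v_t=\mp v_0'$ across the origin is precisely the mechanism, and the $L^p$ computation via the change of variable $s=r-t$ and the monotonicity $(s+t)^{2-p}\le s^{2-p}$ for $p\ge 2$ is clean.
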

Note that actually for $2<p< \infty$, by dominated convergence, $\|u(t)\|_{L^p} \to 0$ as $t \to \infty$. Another easy consequence of (\ref{sol_outgoing}) is that
\be\lb{linfw}
\|u\|_{|x|^{-1} L^\infty_{t, x}} \leq \|u_0\|_{|x|^{-1} L^\infty_x}.
\ee

\section{Derivation of the equation}\lb{derivation}
We next perform a rigorous derivation of several alternative formulations for equation (\ref{eq_outgoing}):
$$
u(t)=\Phi_0(t)(u_0, u_1)+\int_0^t \Phi_0(t-s)P_+(0, |u(s)|^Nu(s)) \dd s,
$$
where $P_+$ is the projection on outgoing states, see (\ref{outgoing}).

%
%

By taking a derivative in $t$ we obtain a similar equation for $u_t$, namely
\be\lb{eq_ut}
u_t(t)=\Phi_1(t)(u_0, u_1) + P_{0+}(0, |u(t)|^Nu(t)) + \int_0^t \Phi_1(t-s)P_+(0, |u(s)|^Nu(s)) \dd s,
\ee
where the extra term comes from the derivative hitting the integral. Here
$$
P_{0+}(u_0, u_1):=\frac 1 2 \Big(u_0-\frac 1 r \int_0^r\rho u_1(\rho) \dd \rho\Big)
$$
is the first component of $P_+$.

We next write equation (\ref{eq_outgoing}) in a more explicit form. We start with
$$\begin{aligned}
&P_+(0, |u(s)|^Nu(s))=\Big(-\frac 1 {2r} \int_0^r \rho |u(\rho, s)|^Nu(\rho, s) \dd \rho, \frac 1 2 |u(s)|^Nu(s)\Big).
\end{aligned}$$
Then
\be\lb{com}\begin{aligned}
&\int_0^t \Phi_0(t-s)P_+(0, |u(r, s)|^Nu(r, s)) \dd s = \\
&= \frac 1 2 \Big(-\int_0^t \cos((t-s)\sqrt{-\Delta}) \Big(\frac 1 r \int_0^r \rho |u(\rho, s)|^Nu(\rho, s) \dd \rho\Big) \dd s + \\
&+ \int_0^t \frac {\sin((t-s)\sqrt{-\Delta})}{\sqrt{-\Delta}} |u(s)|^Nu(s) \dd s\Big) \\
&= -\frac 1 2 \frac{\sin(t\sqrt{-\Delta})}{\sqrt{-\Delta}} \Big(\frac 1 r \int_0^r \rho |u_0(\rho)|^Nu_0(\rho) \dd \rho\Big) + \\
&+ \frac 1 2 \int_0^t \frac{\sin((t-s)\sqrt{-\Delta})}{\sqrt{-\Delta}}\Big(-\frac {N+1} r \int_0^r \rho |u(\rho, s)|^Nu_t(\rho, s) \dd \rho + |u(s)|^Nu(s)\Big) \dd s.
\end{aligned}\ee
Thus the equation (\ref{eq_outgoing}) becomes
\be\lb{eq_out}\begin{aligned}
&u(t) = \cos(t\sqrt{-\Delta}) u_0 + \frac {\sin(t\sqrt{-\Delta})}{\sqrt{-\Delta}} u_1 - \frac 1 2 \int_0^t \cos((t-s)\sqrt{-\Delta}) \\
&\Big(\frac 1 r \int_0^r \rho |u(\rho, s)|^Nu(\rho, s) \dd \rho\Big) \dd s + \frac 1 2 \int_0^t \frac {\sin((t-s)\sqrt{-\Delta})}{\sqrt{-\Delta}} |u(s)|^Nu(s) \dd s.
\end{aligned}\ee
As shown by (\ref{com}), another equivalent formulation of (\ref{eq_outgoing}) is
\be\lb{equivalent}\begin{aligned}
&u_{tt}-\Delta u + \frac 1 2 \Big(\frac {N+1} r \int_0^r \rho |u(\rho)|^Nu_t(\rho) \dd \rho - |u|^Nu\Big) = 0, \\
&u(0)=u_0,\ u_t(0)=-\frac 1 {2r} \int_0^r \rho |u_0(\rho)|^Nu_0(\rho) \dd \rho + u_1.
\end{aligned}\ee
Note that the original equation (\ref{defocusing}) is equivalent to the system
$$
\left\{\begin{array}{l}
u_t=v \\
v_t=\Delta u + |u|^Nu
\end{array}\right.,\ u(0)=u_0,\ v(0)=u_1.
$$
\begin{lemma}\lb{lemma_system}
The modified equation (\ref{eq_outgoing}) is equivalent to the system
\be\lb{eq_outgoing'}
\left\{\begin{array}{l}
\displaystyle u_t=v-\frac 1 {2r} \int_0^r \rho |u(\rho)|^Nu(\rho) \dd \rho \\
\displaystyle v_t=\Delta u + \frac 1 2 |u|^N u 
\end{array}\right.,\ u(0)=u_0,\ v(0)=u_1.
\ee
\end{lemma}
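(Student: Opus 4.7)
The plan is to exploit the chain of equivalences already in place: the paper has established that equation (\ref{eq_outgoing}) is equivalent to the second-order formulation (\ref{equivalent}) via the computation (\ref{com}). So the only thing left is to show that (\ref{equivalent}) is equivalent to the first-order system (\ref{eq_outgoing'}). This is nothing more than the right change of auxiliary variable.

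Concretely, I would introduce
$$
v := u_t + \frac{1}{2r} \int_0^r \rho |u(\rho)|^N u(\rho) \dd\rho,
$$
which is precisely chosen so that the first line of (\ref{eq_outgoing'}) is just the definition of $v$ solved for $u_t$. For the initial condition, evaluating at $t=0$ and using the initial datum $u_t(0) = -\frac{1}{2r}\int_0^r \rho |u_0|^N u_0 \dd\rho + u_1$ from (\ref{equivalent}) gives $v(0) = u_1$, matching the system.

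Next I would differentiate $v$ in $t$: the chain rule applied to the nonlocal term produces a factor $N+1$, giving
$$
v_t = u_{tt} + \frac{N+1}{2r} \int_0^r \rho |u(\rho)|^N u_t(\rho) \dd\rho.
$$
Substituting $u_{tt}$ from (\ref{equivalent}) cancels the nonlocal term exactly, leaving $v_t = \Delta u + \frac 1 2 |u|^N u$, which is the second line of (\ref{eq_outgoing'}). Conversely, starting from a solution of the system and eliminating $v$ by differentiating the first equation in $t$ and plugging into the second recovers (\ref{equivalent}) together with its initial conditions. So (\ref{equivalent}) $\Leftrightarrow$ (\ref{eq_outgoing'}), and combined with the equivalence (\ref{eq_outgoing}) $\Leftrightarrow$ (\ref{equivalent}) from Section \ref{derivation} we are done.

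There is no genuine obstacle here: the lemma is essentially an algebraic identity, with the cancellation of the nonlocal $(N+1)$-term under differentiation being the only point to verify. The only mild care needed is in noting that the computation of $v_t$ involves differentiating $|u|^N u$ in $t$, which requires enough regularity of $u$ in $t$ and pointwise sense to be well-defined; for strong solutions in the classes considered later (local existence built in Section \ref{local_ex}) this is automatic, and for the weaker notion of solution one appeals to approximation by smooth data, which is standard.
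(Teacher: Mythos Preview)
Your proposal is correct and matches the paper's own proof essentially line for line: the paper also reduces the lemma to the equivalence (\ref{equivalent}) $\Leftrightarrow$ (\ref{eq_outgoing'}), introduces the same auxiliary variable $v = u_t + \frac{1}{2r}\int_0^r \rho |u(\rho)|^N u(\rho)\,d\rho$, and verifies both directions via the $(N+1)$-cancellation and the initial condition check you describe.
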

\begin{proof}[Proof of Lemma \ref{lemma_system}] Assume that (\ref{eq_outgoing'}) holds. Setting $t=0$ we obtain that
$$
u_t(0)=v(0)-\frac 1 {2r} \int_0^r \rho |u_0(\rho)|^Nu_0(\rho) \dd \rho=u_1-\frac 1 {2r} \int_0^r \rho |u_0(\rho)|^Nu_0(\rho) \dd \rho
$$
and by differentiating we obtain
$$\begin{aligned}
u_{tt}&=v_t-\frac {N+1} {2r} \int_0^r \rho |u(\rho)|^N u_t(\rho) \dd \rho \\
&=\Delta u + \frac 1 2 |u|^N u -\frac {N+1} {2r} \int_0^r \rho |u(\rho)|^N u_t(\rho) \dd \rho,
\end{aligned}$$
that is (\ref{equivalent}).
	
Conversely, assume that (\ref{equivalent}) holds. Let $v=u_t+\frac 1 {2r} \int_0^r \rho |u(\rho)|^Nu(\rho) \dd \rho$. Then
$$
v(0)=u_t(0) + \frac 1 {2r} \int_0^r \rho |u_0(\rho)|^Nu_0(\rho) \dd \rho = u_1
$$
and
$$\begin{aligned}
v_t&=u_{tt} + \frac {N+1} {2r} \int_0^r \rho |u(\rho)|^N u_t(\rho) \dd \rho =\Delta u + \frac 1 2 |u|^N u.
\end{aligned}$$
Thus $(u, v)$ satisfy (\ref{eq_outgoing'}).
\end{proof}

Finally, equation (\ref{eq_outgoing'}) can be rephrased as a first-order equation:
\begin{lemma}\lb{lemma_firstorder} For smooth and compactly supported solutions, equation (\ref{eq_outgoing}) is equivalent to
\be\lb{eqn}\begin{aligned}
u_t+u_r+\frac u r+\frac 1 {2r}\int_0^r \rho|u(\rho)|^Nu(\rho) \dd \rho =  (\partial_r + \frac 1 r) \Phi_0(t)(u_0, u_1) + \\
+ \Phi_1(t)(u_0, u_1), u(0)=u_0.
\end{aligned}\ee
In particular, if the initial data $(u_0, u_1)$ are outgoing, the equation becomes
\be\lb{eqn'}
u_t+u_r+\frac u r+\frac 1 {2r}\int_0^r \rho|u(\rho)|^Nu(\rho) \dd \rho = 0,\ u(0)=u_0.
\ee
\end{lemma}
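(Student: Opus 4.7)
The plan is to derive (\ref{eqn}) directly from the equivalent system formulation (\ref{eq_outgoing'}) established in Lemma~\ref{lemma_system}, and then obtain the reverse implication from uniqueness. Writing $u^L(t) := \Phi_0(t)(u_0, u_1)$ and $u^L_t(t) := \Phi_1(t)(u_0, u_1)$ for the free linear evolution, I set $W := u - u^L$ and $V := v - u^L_t$. Subtracting the free wave system from (\ref{eq_outgoing'}) produces
\begin{align*}
W_t &= V - \frac{1}{2r}\int_0^r \rho|u(\rho)|^N u(\rho) \dd \rho, \\
V_t &= \Delta W + \frac{1}{2}|u|^N u,
\end{align*}
with vanishing initial data $W(0) = V(0) = 0$.

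The heart of the argument is to show that the outgoing defect $Z := V + W_r + W/r$ vanishes identically. I will compute $Z_t = V_t + W_{rt} + W_t/r$ from the above system, using the identity $\partial_r\bigl(\frac{1}{r}\int_0^r \rho f \dd\rho\bigr) = f - \frac{1}{r^2}\int_0^r \rho f \dd\rho$. The pointwise $\frac{1}{2}|u|^N u$ contribution from $V_t$ should cancel against the $-\frac{1}{2}|u|^N u$ produced by $\partial_r$ acting on the nonlocal term in $W_{rt}$, and the two $\frac{1}{2r^2}\int_0^r \rho|u|^Nu \dd\rho$ pieces arising from $W_{rt}$ and $W_t/r$ should cancel as well, leaving the clean identity $Z_t = \Delta W + V_r + V/r$. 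Since $\Delta W = W_{rr} + 2W_r/r$ in the radial setting, a direct expansion gives $Z_r + Z/r = V_r + V/r + \Delta W$, so $Z_t = Z_r + Z/r$, or equivalently $(rZ)_t = (rZ)_r$. Thus $rZ$ is constant along the characteristics $r + t = \mathrm{const}$, and since $rZ$ vanishes at $t = 0$ it vanishes throughout $\{(r,t) : r, t \geq 0\}$.

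Translating $V + W_r + W/r = 0$ back through $v = u_t + \frac{1}{2r}\int_0^r \rho|u|^N u \dd\rho$ yields (\ref{eqn}); equation (\ref{eqn'}) then follows, since for outgoing initial data the right-hand side of (\ref{eqn}) equals $u^L_t + u^L_r + u^L/r = 0$ by definition of outgoing. For the converse, given a smooth compactly supported solution $u$ of (\ref{eqn}), let $\tilde u$ solve (\ref{eq_outgoing}) with the same initial data; by what has just been shown $\tilde u$ also satisfies (\ref{eqn}), and uniqueness for the first-order nonlocal transport equation (\ref{eqn}) along the characteristics $r - t = \mathrm{const}$, where the nonlocal forcing is locally Lipschitz, forces $\tilde u = u$. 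The main obstacle is purely algebraic: several $|u|^N u$ and $\frac{1}{r^2}\int_0^r \rho|u|^Nu \dd\rho$ terms appear in $Z_t$ with signs that must match for the first-order equation $Z_t = Z_r + Z/r$ to close; without this exact cancellation $Z$ would satisfy an equation coupled to $u$ itself, and the conclusion $Z \equiv 0$ would be unavailable.
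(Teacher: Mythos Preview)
Your argument is correct but proceeds quite differently from the paper's.

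For the forward implication, the paper applies the incoming projection $P_{1-}(f,g)=f_r+f/r+g$ directly to the Duhamel formulas for $(u,u_t)$, then invokes the structural identity $P_-\Phi(t-s)P_+=0$ for $t\ge s$ to kill the integral term in one stroke; only the boundary contribution $P_{0+}(0,|u|^Nu)$ survives, which is exactly the nonlocal term in (\ref{eqn}). Your route instead subtracts the free evolution, sets $Z=V+W_r+W/r$, and checks by a sign-tracking computation that the nonlinear pieces cancel so that $(rZ)_t=(rZ)_r$, whence $Z\equiv 0$ by transport from the zero data. Your computation is correct (the $\tfrac12|u|^Nu$ terms and the $\tfrac1{2r^2}\int_0^r\rho|u|^Nu\,d\rho$ terms do cancel as you predict), and it has the virtue of being self-contained: you are in effect re-proving the identity $P_-\Phi(t)P_+=0$ at the level of the nonlinear PDE rather than quoting it. The paper's approach is shorter and more conceptual, since it isolates that projection identity as the sole reason the first-order reduction works.

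For the converse, the paper is more direct: it multiplies (\ref{eqn}) by $r$, differentiates in $t$, substitutes (\ref{eqn}) once more, and recovers (\ref{equivalent}) (hence (\ref{eq_outgoing})) by a straightforward calculation. Your route---solving (\ref{eq_outgoing}) with the same data and invoking uniqueness for (\ref{eqn})---is valid but less economical, since it imports local well-posedness for (\ref{eq_outgoing}) and a Gronwall/characteristic uniqueness argument for the nonlocal transport equation. One small point to be careful about: the solution $\tilde u$ of (\ref{eq_outgoing}) need not be compactly supported (infinite propagation speed), but your forward argument only uses smoothness, so this is harmless.
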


Observe that even if the initial data $(u_0, u_1)$ are outgoing, (\ref{eqn'}) shows that the pair $(u(t), u_t(t))$ fails to be outgoing at any time $t \geq 0$, even at time $t=0$ (also see (\ref{equivalent}), which shows that $(u(0), u_t(0))\ne(u_0, u_1)$).

On the other hand, in view of (\ref{eq_outgoing'}), (\ref{eqn'}) precisely means that $(u(t), v(t))$ are outgoing for $t \geq 0$.

\begin{proof}[Proof of Lemma \ref{lemma_firstorder}] Assume that $u$ solves (\ref{eq_outgoing}); hence (\ref{eq_ut}) also holds.
	
Let $P_{1-}(u_0, u_1):=(u_0)_r+\frac {u_0} r+u_1$ be the second component of the incoming projection $P_-$. Combining (\ref{eq_outgoing}) and (\ref{eq_ut}) and applying $P_{1-}$ to $(u, u_t)$ we obtain
$$\begin{aligned}
P_{1-} (u(t), u_t(t)) &= P_{1-} \Phi(t) (u_0, u_1) + P_{1-} (0, P_{0+}(0, |u(t)|^Nu(t))) + \\
&+ \int_0^t P_{1-} \Phi(t-s) P_+(0, |u(s)|^Nu(s)) \dd s.
\end{aligned}$$
However, $P_{1-} \Phi(t-s) P_+=0$ for $t \geq s$ and
$$
P_{1-}(0, P_{0+}(0, |u(t)|^Nu(t))) = P_{0+}(0, |u(t)|^Nu(t)) = -\frac 1 {2r}\int_0^r \rho|u(\rho, t)|^Nu(\rho, t) \dd \rho.
$$
We obtain exactly (\ref{eqn}).
	
Conversely, assume (\ref{eqn}) holds. We obtain in particular that
$$
u_t(0)=\Phi_1(0)(u_0, u_1)-\frac 1 {2r}\int_0^r \rho|u_0(\rho)|^Nu_0(\rho) \dd \rho=u_1-\frac 1 {2r}\int_0^r \rho|u_0(\rho)|^Nu_0(\rho) \dd \rho.
$$
Multiplying (\ref{eqn}) by $r$ we obtain
$$
(ru)_t+(ru)_r+\frac 1 2\int_0^r \rho |u(\rho)|^Nu(\rho) \dd \rho = (r\Phi_0(t)(u_0, u_1))_r + r\Phi_1(t)(u_0, u_1).
$$
Therefore, taking a $t$ derivative and using the same relation again,
$$\begin{aligned}
(ru)_{tt} &= -(ru)_{rt} - \frac {N+1} 2 \int_0^r \rho |u(\rho)|^N u_t(\rho) \dd \rho  + (r\Phi_1(t)(u_0, u_1))_r + \\
&+ (r\Phi_1(t)(u_0, u_1))_t\\
&= (ru)_{rr} + \frac 1 2 r |u|^N u - (r\Phi_0(t)(u_0, u_1))_{rr} - (r\Phi_1(t)(u_0, u_1))_r - \\
&\frac {N+1} 2 \int_0^r \rho |u(\rho)|^N u_t(\rho) \dd \rho + (r\Phi_1(t)(u_0, u_1))_r+(r\Phi_1(t)(u_0, u_1))_t
\end{aligned}$$
or in other words, since $(r\Phi_0(t)(u_0, u_1))_{rr} = (r\Phi_1(t)(u_0, u_1))_t$,
$$
u_{tt} = u_{rr} + \frac 2 r u_r + \frac 1 2 |u|^N u - \frac {N+1} {2r} \int_0^r \rho |u(\rho)|^N u_t(\rho) \dd \rho,
$$
that is (\ref{equivalent}), which is equivalent to (\ref{eq_outgoing}).
\end{proof}

\section{Existence results}\lb{local_ex}


Our first result is that the equation (\ref{eq_outgoing}) is well-posed in the critical sense for $(u_0, u_1) \in \dot H^{s_c} \times \dot H^{s_c-1}$ initial data. In addition, if the initial data is outgoing, then the solution lives inside a thickened cone, which we can identify as a sharp Huygens principle.

\begin{proposition}\lb{well-posed} Assume that $N \geq 4$ and $(u_0, u_1) \in \dot H^{s_c} \times \dot H^{s_c-1}$ are radial. Then there exist $T>0$ and a corresponding solution $u$ to (\ref{eq_out}) on $I=[0, T]$ such that
$$
\|u\|_{L^\infty_t \dot H^{s_c}_x(\R^3 \times I) \cap L^{N/2}_t L^\infty_x(\R^3 \times I)} \les \|u_0\|_{\dot H^{s_c}} + \|u_1\|_{\dot H^{s_c-1}}.
$$
If $(u_0, u_1)$ are also small then $I=[0, \infty)$.

Finally, if $u_0$ and $u_1$ are outgoing and supported on $ \ov{\R^3 \setminus B(0, R)}$ with $R \geq 0$, then for $t \in I$ $u(t)$ and $u_t(t)$ are supported on $\ov{\R^3 \setminus B(0, R+t)}$.
\end{proposition}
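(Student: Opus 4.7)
My plan is a Picard contraction in $X_T := L^\infty_t \dot H^{s_c}_x \cap L^{N/2}_t L^\infty_x$ on $I = [0, T]$, at the critical regularity $s_c = 3/2 - 2/N$. The pair $(q, r) = (N/2, \infty)$ is wave-admissible exactly when $N \geq 4$ (the admissibility condition $\frac{2}{q} + \frac{2}{r} \leq 1$ becomes $\frac{4}{N} \leq 1$, and the scaling gap $\frac{1}{q} + \frac{3}{r} = \frac{2}{N} = \frac{3}{2} - s_c$ holds automatically), the endpoint $r = \infty$ being available in the radial setting by \cite{klma}. Consequently $\|\Phi_0(\cdot)(u_0, u_1)\|_{X_T} \les \|(u_0, u_1)\|_{\dot H^{s_c} \times \dot H^{s_c-1}}$. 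Because $1 \leq s_c < 3/2$ when $N \geq 4$, Lemma \ref{bounds} yields boundedness of $P_+$ on $\dot H^{s_c} \times \dot H^{s_c-1}$; combining this with the same Strichartz bound slice-by-slice in $s$ and Minkowski's inequality,
$$\Big\|\int_0^t \Phi_0(t-s) P_+(0, F(s)) \dd s\Big\|_{X_T} \les \int_0^T \|P_+(0, F(s))\|_{\dot H^{s_c} \times \dot H^{s_c-1}} \dd s \les \|F\|_{L^1_t \dot H^{s_c-1}_x}.$$

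Next I close the estimate for $F = |u|^N u$ via a fractional chain rule. Since $s_c - 1 = 1/2 - 2/N \in [0, 1/2)$ and $|u|^N u$ is at least $C^1$, a Christ--Weinstein-type inequality gives $\||u|^N u\|_{\dot H^{s_c-1}} \les \|u\|_{L^{3N}}^N \|u\|_{\dot W^{s_c-1, 6}}$. The embedding $\dot H^{s_c} \hookrightarrow \dot W^{s_c-1, 6}$ (both sides scale as $s_c - 3/2$, and the embedding reduces to $\dot H^1 \hookrightarrow L^6$ after writing $(-\Delta)^{(s_c-1)/2} = (-\Delta)^{-1/2}(-\Delta)^{s_c/2}$), together with $\|u\|_{L^{3N}} \leq \|u\|_{L^\infty}^{1/2}\|u\|_{L^{3N/2}}^{1/2}$ interpolated against the Sobolev embedding $\dot H^{s_c} \hookrightarrow L^{3N/2}$, produces
$$\||u|^N u\|_{\dot H^{s_c-1}} \les \|u\|_{L^\infty}^{N/2} \|u\|_{\dot H^{s_c}}^{(N+2)/2}.$$
Integrating in $t$ yields the critical bound $\|F\|_{L^1_t \dot H^{s_c-1}_x} \les \|u\|_{L^{N/2}_t L^\infty_x}^{N/2} \|u\|_{L^\infty_t \dot H^{s_c}}^{(N+2)/2} \leq \|u\|_{X_T}^{N+1}$.

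These two pieces combine to give $\|\Psi u\|_{X_T} \leq C\|(u_0, u_1)\|_{\dot H^{s_c} \times \dot H^{s_c-1}} + C\|u\|_{X_T}^{N+1}$ for the Picard map $\Psi u := \Phi_0(\cdot)(u_0, u_1) + \int_0^\cdot \Phi_0(\cdot - s) P_+(0, |u|^N u) \dd s$, together with a matching difference estimate via the pointwise bound $\big||a|^N a - |b|^N b\big| \les (|a|+|b|)^N|a-b|$ and the same fractional-derivative manipulation. For small data this is a contraction globally on $[0, \infty)$. For arbitrary large data I would work on a short $[0, T]$ and exploit that $\|\Phi_0(\cdot)(u_0, u_1)\|_{L^{N/2}_t L^\infty_x([0, T])} \to 0$ as $T \to 0^+$ by dominated convergence, which lets me choose a ball in $X_T$ whose $L^{N/2}_t L^\infty_x$-radius is small enough to absorb the superlinear nonlinearity while its $L^\infty_t \dot H^{s_c}_x$-radius is merely comparable to $\|(u_0, u_1)\|_{\dot H^{s_c} \times \dot H^{s_c-1}}$.

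The support claim follows by induction along the Picard iterates $u^{(n+1)} := \Psi u^{(n)}$. If $(u_0, u_1)$ is outgoing and supported in $\ov{\R^3 \setminus B(0, R)}$, the lemma cited just after (\ref{incoming}) gives that $\Phi_0(t)(u_0, u_1)$ is supported in $\ov{\R^3 \setminus B(0, R+t)}$. Assuming inductively that $u^{(n)}(s)$ is supported in $|x| \geq R + s$, the nonlocal integral $\tfrac{1}{r}\int_0^r \rho |u^{(n)}|^N u^{(n)} \dd \rho$ vanishes for $r \leq R + s$, so $P_+(0, |u^{(n)}|^N u^{(n)}(s))$ is an outgoing state supported in $|x| \geq R + s$; the same lemma then places $\Phi_0(t-s)P_+(0, \ldots)$ in $|x| \geq R+t$, and the support property transfers to the limit. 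The hardest step will be the fractional chain rule, which is routine for integer $N$ but requires the Christ--Weinstein formulation (with derivative $(N+1)|u|^N$) in general, as does the corresponding difference estimate on $(|u|+|\tilde u|)^N(u - \tilde u)$ at fractional regularity; the remainder is routine Strichartz bookkeeping.
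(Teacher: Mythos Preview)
Your proposal is correct and follows essentially the same route as the paper: a contraction in $L^\infty_t \dot H^{s_c}_x \cap L^{N/2}_t L^\infty_x$ using the radial endpoint Strichartz estimate, the $\dot H^{s_c}\times\dot H^{s_c-1}$-boundedness from Lemma~\ref{bounds}, and the fractional-chain-rule bound $\||u|^N u\|_{\dot H^{s_c-1}} \les \|u\|_{L^\infty}^{N/2}\|u\|_{\dot H^{s_c}}^{N/2+1}$, with the support statement read off the Picard iterates. The only cosmetic difference is that you apply Lemma~\ref{bounds} to $P_+$ directly in the formulation (\ref{eq_outgoing}), whereas the paper works with (\ref{eq_out}) and applies the same lemma to the nonlocal operator separately; the underlying estimates are identical.
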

\begin{proof}[Proof of Proposition \ref{well-posed}]
We work with the (\ref{eq_out}) version of the equation. We use a fixed point argument in the space $L^\infty_t \dot H^{s_c}_x(\R^3 \times I) \cap L^{N/2}_t L^\infty_x(\R^3 \times I)$. We put the inhomogenous term $|u|^N u$ in $L^1_t \dot H^{s_c-1}_x(\R^3 \times I)$ and $\frac 1 r \int_0^r \rho |u(\rho, t)|^N u(\rho, t) \dd \rho$ into $L^1_t \dot H^{s_c}_x(\R^3 \times I)$. Indeed,
$$\begin{aligned}
\||u|^Nu\|_{L^1_t \dot H^{s_c-1}_x(\R^3 \times I)} &\les \|u\|^{N/2}_{L^{N/2}_t L^\infty_x(\R^3 \times I)} \|u\|^{N/2}_{L^\infty_t L^{3N/2}_x(\R^3 \times I)} \|u\|_{\dot L^\infty_t W^{s_c-1, 6}_x(\R^3 \times I)} \\
&\les \|u\|^{N/2}_{L^{N/2}_t L^\infty_x(\R^3 \times I)} \|u\|^{N/2+1}_{L^\infty_t \dot H^{s_c}_x(\R^3 \times I)}.
\end{aligned}$$
The conclusion follows as in the case of the usual supercritical equation (\ref{defocusing}), since by Lemma \ref{bounds}
$$
\Big\|\frac 1 r \int_0^r \rho |u(\rho, t)|^N u(\rho, t) \dd \rho\Big\|_{\dot H^{s_c}_x} \les \||u(\rho, t)|^N u(\rho, t)\|_{\dot H^{s_c-1}_x}
$$
and consequently
$$
\Big\|\frac 1 r \int_0^r \rho |u(\rho, t)|^N u(\rho, t) \dd \rho\Big\|_{L^\infty_t \dot H^{s_c}_x(\R^3 \times I)} \les \|u\|^{N/2}_{L^{N/2}_t L^\infty_x(\R^3 \times I)} \|u\|^{N/2+1}_{L^\infty_t \dot H^{s_c}_x(\R^3 \times I)}.
$$
The interval $I=[0, T]$ is chosen such that the linear evolution of the initial data has small norm on $I$, $\|\Phi_0(t)(u_0, u_1)\|_{L^{N/2}_t L^\infty_x(\R^3 \times I)} << 1$.

The last conclusion concerning support follows because it holds for each iteration in the fixed point scheme, due to the outgoing projection in the equation.
\end{proof}

%
%
%
%

Next, we prove an existence result in the subcritical sense (where the time of existence depends only on the norm of the initial data) for equation (\ref{eq_outgoing}). In addition, we also need to show that the solution depends continuously on the initial data and that regularity and some decay are preserved.
\begin{proposition}\lb{local_existence}
Assume that $N \geq 2$ and $(u_0, u_1) \in ((\dot H^1 \cap L^\infty) \times L^2)_{out}$ are radial and outgoing. Then there exist an interval $I=[0, T]$,
$$
T\geq C \min(\|u_0\|_{\dot H^1}^{-2} \|u_0\|_{L^\infty}^{-N+2}, \|u_0\|_{L^\infty}^{-N}),$$
and a corresponding solution $u$ to (\ref{eq_outgoing}) defined on $\R^3 \times I$
such that
$$
\|u\|_{L^\infty_t \dot H^1_x(\R^3 \times I)} \les \|u_0\|_{\dot H^1},\ \|u\|_{L^\infty_{t, x}(\R^3 \times I)} \les \|u_0\|_{L^\infty}.
$$
The solution $u$ depends continuously on the initial data, i.e.\ every $u_0 \in \dot H^1 \cap L^\infty$ has a neighborhood $\mc N=\{\tilde u_0 \mid \|\tilde u_0\|_{\dot H^1} \leq 2 \|u_0\|_{\dot H^1},\ \|\tilde u_0\|_{L^\infty} \leq 2 \|u_0\|_{L^\infty}\}$ such that if $\tilde u_0 \in \mc N$ then the corresponding solution $\tilde u$ is also defined on $\R^3 \times I$ and
\be\lb{cont_dep}
\|\tilde u - u\|_{L^\infty_t \dot H^1_x(\R^3 \times I) \cap L^\infty_{t, x}(\R^3 \times I)} \les \|\tilde u_0 - u_0\|_{\dot H^1 \cap L^\infty}.
\ee

If in addition $(u_0, u_1) \in \dot H^2 \times \dot H^1$, then $u \in L^\infty_t \dot H^2_x(\R^3 \times I)$; furthermore, for $v$ given by (\ref{eq_outgoing'}), $v \in L^\infty_t (\dot H^1_x \cap L^2_x)(\R^3 \times I)$. Likewise, if $u_0 \in \langle x \rangle^{-1} L^\infty$, then $u \in \langle x \rangle^{-1} L^\infty_{t, x}(\R^3 \times I)$.

If $N \geq 4$ and the initial data are also sufficiently small, i.e.
\be\lb{small_data}
\|u_0\|_{\dot H^1}^4 \|u_0\|_{L^\infty}^{N-4} + \|u_0\|_{\dot H^1}^2 \|u_0\|_{L^\infty}^{N-2} << 1,
\ee
then the corresponding solution $u$ to (\ref{eq_outgoing}) exists globally, forward in time, and
$$
\|u\|_{L^\infty_t \dot H^1_x \cap L^2_t L^\infty_x} \les \|u_0\|_{\dot H^1},\ \|u\|_{L^\infty_{t, x}} \les \|u_0\|_{L^\infty}.
$$
These solutions also depend continuously on initial data and preserve regularity and decay for all time.

\end{proposition}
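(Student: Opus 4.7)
The plan is to run a contraction-mapping argument for the first-order formulation (\ref{eqn'}) in the Banach space $X_T := L^\infty_t\dot H^1_x \cap L^\infty_{t,x}$ on $\R^3\times[0,T]$. For the $\dot H^1$ piece I would use the Duhamel formulation (\ref{eq_outgoing}) combined with the bound $\|P_+(0,F)\|_{\dot H^1\times L^2}\lesssim \|F\|_{L^2}$ coming from Lemma \ref{bounds}, so that Strichartz/energy yields
$$
\|u\|_{L^\infty_t\dot H^1_x} \lesssim \|u_0\|_{\dot H^1}+\|u_1\|_{L^2}+\int_0^T \||u|^Nu\|_{L^2_x}\dd s.
$$
Since Lemma \ref{bounds} also gives $\|u_1\|_{L^2}\sim \|u_0\|_{\dot H^1}$ for outgoing data, and since the interpolation $\||u|^Nu\|_{L^2}\le \|u\|_{L^\infty}^{N-2}\|u\|_{L^6}^3\lesssim \|u\|_{L^\infty}^{N-2}\|u\|_{\dot H^1}^3$ (valid when $N\ge 2$) reduces everything to the two working norms, the $\dot H^1$ condition closes provided $T\cdot \|u_0\|_{L^\infty}^{N-2}\|u_0\|_{\dot H^1}^{2}\lesssim 1$.

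For the $L^\infty_{t,x}$ piece I would exploit that for outgoing data, equation (\ref{eqn'}) is a transport equation with a single nonlocal forcing. Multiplying by $r$ gives $(ru)_t+(ru)_r = -\frac12\int_0^r\rho|u|^Nu\dd\rho$, which integrates along the characteristics $r=r_0+t$ to produce an explicit representation of $ru(r,t)$. Applying Lemma \ref{nonlocal} to the right-hand side and using $|r_0/r|\le 1$ yields
$$
\|u\|_{L^\infty_{t,x}(\R^3\times[0,T])} \lesssim \|u_0\|_{L^\infty}+T\bigl(\|u\|_{L^\infty_{t,x}}^{N+1}+\|u\|_{L^\infty_{t,x}}^{N-2}\|u\|_{L^\infty_t\dot H^1_x}^3\bigr),
$$
and the bootstrap $\|u\|_{L^\infty_{t,x}}\le 2\|u_0\|_{L^\infty}$ closes provided $T\cdot\|u_0\|_{L^\infty}^N\lesssim 1$. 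The two conditions together produce the lifetime $T\gtrsim \min(\|u_0\|_{\dot H^1}^{-2}\|u_0\|_{L^\infty}^{-(N-2)},\|u_0\|_{L^\infty}^{-N})$ claimed. The iteration preserves the outgoing property because each Picard iterate is built via (\ref{eq_outgoing}) whose nonlinearity is hit by $P_+$.

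Continuous dependence (\ref{cont_dep}) follows by running the same two estimates on the difference $\tilde u-u$ for $(\tilde u_0,\tilde u_1),(u_0,u_1)\in\mathcal N$, using the pointwise bound $||\tilde u|^N\tilde u-|u|^Nu|\lesssim(|\tilde u|^N+|u|^N)|\tilde u-u|$ and the fact that the sum $|\tilde u|^N+|u|^N$ is already controlled in the working norms by the local existence step. Persistence of $\dot H^2\times \dot H^1$ regularity is obtained by iterating the same Duhamel/characteristic scheme in the stronger space $L^\infty_t(\dot H^2\cap\dot H^1)_x\cap L^\infty_{t,x}$, using $\||u|^Nu\|_{\dot H^1}\lesssim \|u\|_{L^\infty}^N\|u\|_{\dot H^1}$ and Lemma \ref{bounds} with $s=1$; the component $v$ from (\ref{eq_outgoing'}) is then recovered algebraically from $u$ and $u_t$ and inherits the required $\dot H^1\cap L^2$ control. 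For weighted decay $u_0\in\langle x\rangle^{-1}L^\infty$, I would test $|x|u(x,t)|$ along characteristics: the homogeneous piece gives $|r_0u_0(r_0)|\le\|u_0\|_{\langle x\rangle^{-1}L^\infty}$ by (\ref{linfw}), while the nonlocal forcing is bounded by splitting $\int_0^R\rho|u|^{N+1}\dd\rho$ into the region $\{\rho\le M/A\}$ where $|u|\le A$ dominates and $\{\rho>M/A\}$ where $|u|\le M/\rho$ dominates (with $M=\|u\|_{\langle x\rangle^{-1}L^\infty_{t,x}}$), yielding an $r$-independent bound $O(M^2A^{N-1})$.

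Finally, for small data global existence under (\ref{small_data}) with $N\ge 4$, I would augment the norm to $\tilde X:= L^\infty_t\dot H^1_x\cap L^2_tL^\infty_x$ and use the radial endpoint Strichartz estimate recalled at the start of Section \ref{preliminary}. Then $\||u|^Nu\|_{L^1_tL^2_x}\lesssim \|u\|_{L^2_tL^\infty_x}^2\|u\|_{L^\infty_{t,x}}^{N-4}\|u\|_{L^\infty_t\dot H^1_x}^3$ while $\||u|^Nu\|_{L^1_tL^\infty_x}\lesssim \|u\|_{L^2_tL^\infty_x}^2\|u\|_{L^\infty_{t,x}}^{N-1}$; with $A=\|u_0\|_{L^\infty}$, $B=\|u_0\|_{\dot H^1}$ these feed into a bootstrap that closes precisely when $B^4A^{N-4}+B^2A^{N-2}\ll 1$, which is exactly (\ref{small_data}). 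The main obstacle I anticipate is balancing the $L^\infty$ and $\dot H^1$ bounds in a way that produces the lifetime in the sharp form stated, and especially making the weighted $\langle x\rangle^{-1}L^\infty$ argument clean in the region $r<t$ near the origin, where the characteristic must be traced back to $r=0$ and the nonlocal operator sees all of the profile.
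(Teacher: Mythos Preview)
Your overall strategy---a contraction in $L^\infty_t\dot H^1_x\cap L^\infty_{t,x}$, closing the $\dot H^1$ bound via $\||u|^Nu\|_{L^2}\lesssim\|u\|_{\dot H^1}^3\|u\|_{L^\infty}^{N-2}$, and upgrading to $L^2_tL^\infty_x$ via the radial endpoint Strichartz in the small-data case---is exactly what the paper does. The paper works uniformly with the Duhamel form (\ref{eq_out}) and uses the outgoing bound (\ref{linf}) from Proposition~\ref{est_outgoing} rather than your characteristic integration of (\ref{eqn'}), but since (\ref{sol_outgoing}) \emph{is} the characteristic solution these are the same device.

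There is one genuine slip in your $L^\infty$ bootstrap. Invoking Lemma~\ref{nonlocal} gives $\big\|\frac1r\int_0^r\rho f\,d\rho\big\|_{L^\infty}\lesssim\|f\|_{L^2\cap L^\infty}$, and the $L^2$ piece produces the term $T\,\|u\|_{\dot H^1}^3\|u\|_{L^\infty}^{N-2}$ that you wrote down. For the bootstrap $\|u\|_{L^\infty}\le 2\|u_0\|_{L^\infty}$ this forces the additional condition $T\,\|u_0\|_{\dot H^1}^3\|u_0\|_{L^\infty}^{N-3}\lesssim 1$, which is \emph{not} implied by the two conditions you kept when $\|u_0\|_{\dot H^1}\gg\|u_0\|_{L^\infty}$; so you do not recover the stated lifetime. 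The paper avoids this by using the sharper Lemma~\ref{improved} (the $L^{3,1}\to L^\infty$ bound) together with $\dot H^1\subset L^{6,2}$, which gives $\||u|^Nu\|_{L^{3,1}}\lesssim\|u\|_{\dot H^1}^2\|u\|_{L^\infty}^{N-1}$ and hence only $T\,\|u_0\|_{\dot H^1}^2\|u_0\|_{L^\infty}^{N-2}\lesssim 1$, matching the claim. The same refinement is what makes the small-data $L^\infty_{t,x}$ bound close under exactly (\ref{small_data}).

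For the weighted decay your splitting argument can be pushed through but is needlessly delicate. The paper simply includes $|x|^{-1}L^\infty_{t,x}$ in the contraction, using (\ref{linfw}) for the outgoing propagator and the one-line bound
\[
\Big\|\frac1r\int_0^r\rho|u|^Nu\,d\rho\Big\|_{|x|^{-1}L^\infty_x}
\lesssim\Big\|\frac{|u|^Nu}{|x|}\Big\|_{L^1_x}
\lesssim\|u\|_{|x|^{-1}L^\infty}\,\|u\|_{\dot H^1}^2\,\|u\|_{L^\infty}^{N-2},
\]
which closes linearly in the weighted norm. Your worry about the region $r<t$ is moot: for outgoing data every Picard iterate is supported in $\{r\ge t\}$ (the nonlocal operator preserves support away from the origin and the outgoing propagator shifts it outward), so no characteristic ever needs to be traced back to $r=0$.
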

Due to the outgoing nature of the equation, if the initial data are supported on $\ov{\R^3 \setminus B(0, R)}$ for some $R \geq 0$, then $u(t)$ is supported on $\ov{\R^3 \setminus B(0, R+t)}$ for $t \in I$. This is true for the solution $u$ because it is true for all the iterates in the fixed point argument.
\begin{proof}[Proof of Proposition \ref{local_existence}] We work with the (\ref{eq_out}) version of the equation. Even though some alternative results are possible using (\ref{equivalent}), this other version destroys the outgoing structure of the equation, leading to worse bounds.
	
We apply a fixed point argument in the space $L^\infty_t \dot H^1_x(\R^3 \times I) \cap L^\infty_{t, x}(\R^3 \times I)$. Consider the linearized version of equation (\ref{eq_out})
\be\lb{eq_linearized}\begin{aligned}
&u(t) = \cos(t\sqrt{-\Delta}) u_0 + \frac {\sin(t\sqrt{-\Delta})}{\sqrt{-\Delta}} u_1 - \frac 1 2 \int_0^t \cos((t-s)\sqrt{-\Delta}) \\
&\Big(\frac 1 r \int_0^r \rho |\tilde u(\rho, s)|^N \tilde u(\rho, s) \dd \rho\Big) \dd s + \frac 1 2 \int_0^t \frac {\sin((t-s)\sqrt{-\Delta})}{\sqrt{-\Delta}} |\tilde u(s)|^N\tilde u(s) \dd s.
\end{aligned}\ee
Note that both the initial data and the inhomogenous term are outgoing, so the whole solution is outgoing.

Assume that $N \geq 2$. In $\dot H^1$ we see that, since $\dot H^1 \subset L^{6, 2}$,
$$\begin{aligned}
\|u\|_{L^\infty_t \dot H^1_x(\R^3 \times I)} &\les \|u_0\|_{\dot H^1} + \|u_1\|_{L^2} + T \Big(\Big\|\frac 1 r \int_0^r \rho |\tilde u(\rho, s)|^N \tilde u(\rho, s) \dd \rho\Big\|_{L^\infty_s \dot H^1_x(\R^3 \times I)} + \\
&+ \||\tilde u(s)|^N \tilde u(s)\|_{L^\infty_s L^2_x(\R^3 \times I)}\Big) \\
&\les \|u_0\|_{\dot H^1} + T \||\tilde u|^N \tilde u\|_{L^\infty_t L^2_x(\R^3 \times I)} \\
&\les \|u_0\|_{\dot H^1} + T \|\tilde u\|_{L^\infty_t \dot H^1_x(\R^3 \times I)}^3 \|\tilde u\|_{L^\infty_{t, x}(\R^3 \times I)}^{N-2}.
\end{aligned}$$
Here we have used Lemma \ref{bounds}. In $L^\infty$, in view of (\ref{linf}) and of the outgoing character of both the initial data and the inhomogenous term,
$$\begin{aligned}
\|u\|_{L^\infty_{t,x}(\R^3 \times I)} &\les \|u_0\|_{L^\infty} + T\Big\|\frac 1 r \int_0^r \rho |\tilde u(\rho, s)|^N \tilde u(\rho, s) \dd \rho\Big\|_{L^\infty_{s, x}(\R^3 \times I)} \\
&\les \|u_0\|_{L^\infty} + T \||\tilde u(s)|^N \tilde u(s)\|_{L^\infty_s L^{3, 1}_x(\R^3 \times I) \cap L^\infty_{s, x}(\R^3 \times I)} \\
&\les \|u_0\|_{L^\infty} + T (\|\tilde u\|^2_{L^\infty_t \dot H^1_x(\R^3 \times I)} \|\tilde u\|^{N-1}_{L^\infty_{t, x}(\R^3 \times I)} + \|\tilde u\|^{N+1}_{L^\infty_{t, x}(\R^3 \times I)}).
\end{aligned}$$
Here we have also used Lemma \ref{improved}. Thus, assuming that
\be\lb{condition}
\|\tilde u\|_{L^\infty_t \dot H^1_x(\R^3 \times I)} \les \|u_0\|_{\dot H^1},\ \|\tilde u\|_{L^\infty_{t, x}(\R^3 \times I)} \les \|u_0\|_{L^\infty},
\ee
and that
\be\lb{cond}
T \leq c \min(\|u_0\|_{\dot H^1}^{-2} \|u_0\|_{L^\infty}^{-N+2}, \|u_0\|_{L^\infty}^{-N})
\ee
with $c$ sufficiently small, we retrieve the same conclusion (\ref{condition}) for $u$.

Next, we show that the mapping $\tilde u \mapsto u$ is a contraction. Indeed, in the same way as above one can prove that for two pairs $u^1$ and $\tilde u^1$, respectively $u^2$ and $\tilde u^2$ that both solve (\ref{eq_linearized}),
\be\lb{45}\begin{aligned}
\|u^1-u^2\|_{L^\infty_t \dot H^1_x(\R^3 \times I)} \les T \|\tilde u^1-\tilde u^2\|_{L^\infty_t \dot H^1_x(\R^3 \times I)} (\|\tilde u^1\|_{L^\infty_t \dot H^1_x(\R^3 \times I)}^2 \\
\|\tilde u^1\|_{L^\infty_{t, x}(\R^3 \times I)}^{N-2} + \|\tilde u^2\|_{L^\infty_t \dot H^1_x(\R^3 \times I)}^2 \|\tilde u^2\|_{L^\infty_{t, x}(\R^3 \times I)}^{N-2})
\end{aligned}\ee
and
\be\lb{46}\begin{aligned}
\|u^1 - u^2\|_{L^\infty_{t, x}(\R^3 \times I)} \les T \|\tilde u^1 - \tilde u^2\|_{L^\infty_{t, x}(\R^3 \times I)} (\|\tilde u^1\|^2_{L^\infty_t \dot H^1_x(\R^3 \times I)} \|\tilde u^1\|^{N-2}_{L^\infty_{t, x}(\R^3 \times I)} + \\
+ \|\tilde u^1\|^N_{L^\infty_{t, x}(\R^3 \times I)} + \|\tilde u^2\|^2_{L^\infty_t \dot H^1_x(\R^3 \times I)} \|\tilde u^2\|^{N-2}_{L^\infty_{t, x}(\R^3 \times I)} + \|\tilde u^2\|^N_{L^\infty_{t, x}(\R^3 \times I)}).
\end{aligned}\ee
Therefore, again assuming condition (\ref{cond}) with $c$ sufficiently small, it follows that the mapping is indeed a contraction on the set of $u$ satisfying condition (\ref{condition}) and it has a fixed point $u=\tilde u$. This fixed point is a solution of (\ref{eq_out}) on $\R^3 \times I$ with the desired properties.

In order to prove the continuous dependence on the initial data, first note that if $\|\tilde u_0\|_{\dot H^1} \leq 2\|u_0\|_{\dot H^1}$ and $\|\tilde u_0\|_{L^\infty} \leq 2\|u_0\|_{L^\infty}$, then the solution $\tilde u$ also exists on an interval $[0, T]$ where
$$
T = c \min(\|u_0\|_{\dot H^1}^{-2} \|u_0\|_{L^\infty}^{-N+2}, \|u_0\|_{L^\infty}^{-N}),
$$
for perhaps a smaller value of $c$. After making this adjustment, take the two versions of equation (\ref{eq_outgoing}) satisfied by $u$ and $\tilde u$ and subtract them from one another. We obtain, analogously to (\ref{45}) and (\ref{46}), that
$$\begin{aligned}
\|\tilde u - u\|_{L^\infty_t \dot H^1_x(\R^3 \times I) \cap L^\infty_{t, x}(\R^3 \times I)} \les \|\tilde u_0-u_0\|_{\dot H^1 \cap L^\infty} + T \|\tilde u - u\|_{L^\infty_t \dot H^1_x(\R^3 \times I) \cap L^\infty_{t, x}(\R^3 \times I)} \\
(\|u_0\|_{\dot H^1}^{2} \|u_0\|_{L^\infty}^{N-2} + \|u_0\|_{L^\infty}^N + \|\tilde u_0\|_{\dot H^1}^{2} \|\tilde u_0\|_{L^\infty}^{N-2} + \|\tilde u_0\|_{L^\infty}^N).
\end{aligned}$$
We obtain (\ref{cont_dep}) for sufficiently small $T$ as in (\ref{cond}).

Next, if the initial data are in $\dot H^2 \times \dot H^1$ we obtain the preservation of regularity for free:
$$\begin{aligned}
\|u\|_{L^\infty_t \dot H^2_x(\R^3 \times I)} &\les \|u_0\|_{\dot H^2} + \|u_1\|_{\dot H^1} + T \Big(\Big\|\frac 1 r \int_0^r \rho|u(\rho, s)|^N u(\rho, s) \dd \rho\Big\|_{L^\infty_s \dot H^2_x(\R^3 \times I)} + \\
&+ \||u(s)|^Nu(s)\|_{L^\infty_s \dot H^1_x(\R^3 \times I)}\Big) \\
& \les \|u_0\|_{\dot H^2} + \|u_1\|_{\dot H^1} + T \||u(s)|^Nu(s)\|_{L^\infty_s \dot H^1_x(\R^3 \times I)} \\
&\les \|u_0\|_{\dot H^2} + \|u_1\|_{\dot H^1} + T \|u\|_{L^\infty_t \dot H^1_x(\R^3 \times I)} \|u\|_{L^\infty_{t, x}(\R^3 \times I)}^N < \infty.
\end{aligned}$$
See Lemma \ref{bounds} for the bound used here.

Following the (\ref{eq_outgoing'}) formulation of equation (\ref{eq_outgoing}), let
$$
v=u_t+\frac 1 {2r} \int_0^r \rho|u_0(\rho)|^N u_0(\rho) \dd \rho.
$$
Then, by (\ref{eq_ut}),
$$
v=\Phi_1(u_0, u_1)+\int_0^t \Phi_1(t-s) P_+(0, |u(s)|^Nu(s)) \dd s.
$$
It immediately follows, same as above, that
$$
\|v\|_{L^\infty_t L^2_x(\R^3 \times I)} \les \|u_0\|_{\dot H^1} + T \|u\|^3_{L^\infty_t \dot H^1(\R^3 \times I)} \|u\|^{N-2}_{L^\infty_{t, x}(\R^3 \times I)}
$$
and
$$
\|v\|_{L^\infty_t \dot H^1_x(\R^3 \times I)} \les \|u_0\|_{\dot H^2} + \|u_1\|_{\dot H^1} + T \|u\|_{L^\infty_t \dot H^1_x(\R^3 \times I)} \|u\|_{L^\infty_{t, x}(\R^3 \times I)}^N.
$$

Concerning the preservation of decay, a rigorous way to prove it is to include it in the fixed point argument. Indeed, consider a pair $u$ and $\tilde u$ that together solve (\ref{eq_linearized}) for initial data $u_0 \in |x|^{-1} L^\infty$. By (\ref{linfw})
$$\begin{aligned}
\|u\|_{|x|^{-1} L^\infty_{t, x}(\R^3 \times I)} &\les \|u_0\|_{|x|^{-1} L^\infty_x} + T \Big\|\frac 1 r \int_0^r \rho|\tilde u(\rho, s)|^N \tilde u(\rho, s) \dd \rho\Big\|_{|x|^{-1} L^\infty_{s, x} (\R^3 \times I)} \\
&\les \|u_0\|_{|x|^{-1} L^\infty_x} + T \Big\|\frac {|\tilde u(s)|^N \tilde u(s)}{|x|}\Big\|_{L^\infty_s L^1_x(\R^3 \times I)} \\
&\les \|u_0\|_{|x|^{-1} L^\infty_x} + T \|\tilde u\|_{|x|^{-1}L^\infty_{t, x}(\R^3 \times I)} \|u\|_{L^\infty_t \dot H^1_x}^2 \|u\|_{L^\infty_{t, x}}^{N-2}.
\end{aligned}$$
Here we have also used the fact that $\dot H^1 \subset L^{6, 2}$. In the same manner we obtain that
$$\begin{aligned}
\|u^1-u^2\|_{|x|^{-1}L^\infty_{t, x}(\R^3 \times I)} \les T \|\tilde u^1 - \tilde u^2\|_{|x|^{-1}L^\infty_{t, x}(\R^3 \times I)} (\|\tilde u^1\|_{L^\infty_t \dot H^1_x}^2 \|\tilde u^1\|_{L^\infty_{t, x}}^{N-2} + \\
+ \|\tilde u^2\|_{L^\infty_t \dot H^1_x}^2 \|\tilde u^2\|_{L^\infty_{t, x}}^{N-2}).
\end{aligned}$$
Therefore, in this case the fixed point of the contraction mapping will be a solution $u \in |x|^{-1} L^\infty_{t, x}(\R^3 \times I)$.

Next, assume that $N \geq 4$ and that the initial data are small. We use a fixed point argument in the space $L^\infty_t \dot H^1_x \cap L^2_t L^\infty_x \cap L^\infty_{t, x}$. Note that
$$\begin{aligned}
\|u\|_{L^\infty_t \dot H^1_x \cap L^2_t L^\infty_x} &\les \|u_0\|_{\dot H^1} + \|u_1\|_{L^2} + \Big\|\frac 1 r \int_0^r \rho |\tilde u(\rho, s)|^N \tilde u(\rho, s) \dd \rho\Big\|_{L^1_s \dot H^1_x} + \\
&+ \||\tilde u(s)|^N \tilde u(s)\|_{L^1_s L^2_x} \\
&\les \|u_0\|_{\dot H^1} + \||\tilde u(s)|^N \tilde u(s)\|_{L^1_s L^2_x} \\
&\les \|u_0\|_{\dot H^1} + \|\tilde u\|^3_{L^\infty_t \dot H^1_x} \|\tilde u\|^2_{L^2_t L^\infty_x} \|\tilde u\|_{L^\infty_{t, x}}^{N-4}
\end{aligned}$$
and, in view of (\ref{linf}),
$$\begin{aligned}
\|u\|_{L^\infty_{t, x}} &\les \|u_0\|_{L^\infty} + \Big\|\frac 1 r \int_0^r \rho |\tilde u(\rho, s)|^N \tilde u(\rho, s) \dd \rho\Big\|_{L^1_s L^\infty_x} \\
&\les \|u_0\|_{L^\infty} + \||\tilde u(s)|^N \tilde u(s)\|_{L^1_s L^{3, 1}_x \cap L^1_s L^\infty_x} \\
&\les \|u_0\|_{L^\infty} + \|\tilde u\|_{L^\infty_t \dot H^1_x}^2 \|\tilde u\|_{L^2_t L^\infty_x}^2 \|\tilde u\|_{L^\infty_{t, x}}^{N-3} + \|\tilde u\|_{L^2_t L^\infty_x}^2 \|\tilde u\|_{L^\infty_{t, x}}^{N-1}.
\end{aligned}$$
Therefore, assuming that
\be\lb{assum}
\|\tilde u\|_{L^\infty_t \dot H^1_x \cap L^2_t L^\infty_x} \les \|u_0\|_{\dot H^1},\ \|\tilde u\|_{L^\infty_{t, x}} \les \|u_0\|_{L^\infty},
\ee
and
\be\lb{assump}
\|u_0\|_{\dot H^1}^4 \|u_0\|_{L^\infty}^{N-4} + \|u_0\|_{\dot H^1}^2 \|u_0\|_{L^\infty}^{N-2} << 1,
\ee
we retrieve the same conclusion (\ref{assum}) for $u$. We also prove that the mapping $\tilde u \mapsto u$ is a contraction. Indeed, for two pairs $u_1$ and $\tilde u_1$, respectively $u_2$ and $\tilde u_2$ that both solve (\ref{eq_linearized}),
$$\begin{aligned}
\|u_1-u_2\|_{L^\infty_t \dot H^1_x \cap L^2_t L^\infty_x} &\les \|\tilde u_1-\tilde u_2\|_{L^\infty_t \dot H^1_x} (\|\tilde u_1\|^2_{L^\infty_t \dot H^1_x} \|\tilde u_1\|^2_{L^2_t L^\infty_x} \|\tilde u_1\|_{L^\infty_{t, x}}^{N-4} + \\
&+ \|\tilde u_2\|^2_{L^\infty_t \dot H^1_x} \|\tilde u_2\|^2_{L^2_t L^\infty_x} \|\tilde u_2\|_{L^\infty_{t, x}}^{N-4})
\end{aligned}$$
and
$$\begin{aligned}
\|u_1-u_2\|_{L^\infty_{t, x}} \les \|\tilde u_1-\tilde u_2\|_{L^\infty_{t, x}} (\|\tilde u_1\|_{L^\infty_t \dot H^1_x}^2 \|\tilde u_1\|_{L^2_t L^\infty_x}^2 \|\tilde u_1\|_{L^\infty_{t, x}}^{N-4} + \|\tilde u_1\|_{L^2_t L^\infty_x}^2 \|\tilde u_1\|_{L^\infty_{t, x}}^{N-2} + \\
+ \|\tilde u_2\|_{L^\infty_t \dot H^1_x}^2 \|\tilde u_2\|_{L^2_t L^\infty_x}^2 \|\tilde u_2\|_{L^\infty_{t, x}}^{N-4} + \|\tilde u_2\|_{L^2_t L^\infty_x}^2 \|\tilde u_2\|_{L^\infty_{t, x}}^{N-2}).
\end{aligned}$$
Thus the mapping $\tilde u \mapsto u$ is a contraction under condition (\ref{assump}) on the set of $u$ that fulfill (\ref{assum}). Its fixed point is a solution of (\ref{eq_out}) with the desired properties.

The continuous dependence on initial data and the persistence of regularity and of decay are proved in exactly the same way as in the large data case.
\end{proof}

We next prove a local existence result in the $L^{N+2}$ norm for $N > 2$, under the assumption that the initial data are supported away from zero. We also find a corresponding small data global existence result for $N>4$.

Besides these existence results, we are interested in proving the continuous dependence of solutions on initial data and the persistence of regularity and decay.

\begin{proposition}\lb{LNexistence} Assume that $N>2$ and $(u_0, u_1)$ are radial and outgoing initial data with $u_0 \in L^{N+2}$ and $\supp u_0 \subset \ov{\R^3 \setminus B(0, R)}$ for some $R>0$. Then there exists a corresponding solution $u$ to (\ref{eq_outgoing}) defined on $\R^3\times I$, where $I=[0, T]$ with $T \geq C R^{\frac{2N-2}{N+2}} \|u_0\|_{L^{N+2}}^{-N}$, and
$$
\|u\|_{L^\infty_t L^{N+2}_x(\R^3 \times I)} \les \|u_0\|_{L^{N+2}}.
$$
The solution $u$ depends continuously on the initial data: every $u_0$ as above has a neighborhood $\mc N=\{\tilde u_0 \mid \|\tilde u_0\| \leq 2 \|u_0\|_{L^{N+2}},\ \supp u_0 \subset \ov{\R^3 \setminus B(0, R)}\}$ such that if $\tilde u_0 \in \mc N$ then the corresponding solution $\tilde u$ is also defined on $\R^3 \times I$ and
$$
\|\tilde u - u\|_{L^\infty_t L^{N+2}_x(\R^3 \times I)} \les \|\tilde u_0 - u_0\|_{L^{N+2}}.
$$

In addition, if $u_0 \in \dot H^1 \cap L^\infty$ then $u \in L^\infty_t \dot H^1_x(\R^3 \times I) \cap L^\infty_{t, x}(\R^3 \times I)$ and if $u_0 \in \langle x \rangle^{-1} L^\infty$ then $u \in \langle x \rangle^{-1} L^\infty_{t, x}(\R^3 \times I)$.

Assume $N>4$. If the initial data
\be\lb{LNsmall}
R^{\frac{4-N}{N+2}} \|u_0\|_{L^{N+2}}^N << 1
\ee
are sufficiently small, then there exists a corresponding global solution $u$ to (\ref{eq_outgoing}) on $\R^3 \times [0, \infty)$ such that $\|u\|_{L^\infty_t L^{N+2}_x} \les \|u_0\|_{L^{N+2}}$.

In addition, the solution depends continuously on the initial data; if $u_0 \in L^\infty$ then $u \in L^\infty_{t, x}$; and there is persistence of regularity and of decay for all time $t \geq 0$.
\end{proposition}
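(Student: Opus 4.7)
The plan is a contraction mapping argument on the complete metric space
$$
X_I := \{u : \|u\|_{L^\infty_t L^{N+2}_x(\R^3 \times I)} \leq 2\|u_0\|_{L^{N+2}},\ \supp u(\cdot, t) \subset \ov{\R^3\setminus B(0, R+t)} \text{ for all } t \in I\},
$$
applied to the Duhamel map $\Psi(\tilde u)(t) := \Phi_0(t)(u_0, u_1) + \int_0^t \Phi_0(t-s) P_+(0, |\tilde u(s)|^N \tilde u(s))\dd s$ from (\ref{eq_outgoing}). The outgoing character of $(u_0, u_1)$ and Proposition \ref{est_outgoing} give $\|\Phi_0(t)(u_0, u_1)\|_{L^\infty_t L^{N+2}_x} \leq \|u_0\|_{L^{N+2}}$, and the support condition is preserved by $\Psi$ because for $\tilde u \in X_I$ the nonlocal integrand $-\frac{1}{2r}\int_0^r \rho|\tilde u|^N \tilde u \dd \rho$ vanishes when $r<R+s$, making $P_+(0, |\tilde u(s)|^N\tilde u(s))$ outgoing data supported on $\ov{\R^3\setminus B(0, R+s)}$, whose $\Phi_0(t-s)$-evolution sits on $\ov{\R^3\setminus B(0, R+t)}$.

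The decisive pointwise-in-time estimate for the Duhamel term combines Proposition \ref{est_outgoing} (applied to the outgoing function $P_+(0, |\tilde u|^N \tilde u)$) with Lemma \ref{awayfromzero}:
$$
\|\Phi_0(t-s) P_+(0, |\tilde u(s)|^N \tilde u(s))\|_{L^{N+2}_x} \leq \Big\|\frac{1}{2r}\int_0^r \rho|\tilde u(\rho, s)|^N \tilde u(\rho, s)\dd \rho\Big\|_{L^{N+2}_x}.
$$
Because $\tilde u(\cdot, s)$ is supported on $\ov{\R^3\setminus B(0, R+s)}$, Lemma \ref{awayfromzero} with $p = (N+2)/(N+1)$ and $q = N+2$ applies; the admissibility requirement $3p<q$ is precisely $N > 2$, and the resulting exponent $1-3/p+3/q$ simplifies to $-(2N-2)/(N+2)$, yielding
$$
\|\Phi_0(t-s) P_+(0, |\tilde u|^N \tilde u)\|_{L^{N+2}_x} \les (R+s)^{-(2N-2)/(N+2)} \|\tilde u(s)\|_{L^{N+2}_x}^{N+1}.
$$
Since the exponent is negative and $R+s \geq R$, one has the crude bound $\int_0^T (R+s)^{-(2N-2)/(N+2)}\dd s \leq TR^{-(2N-2)/(N+2)}$, so choosing $T \leq cR^{(2N-2)/(N+2)}\|u_0\|^{-N}_{L^{N+2}}$ with $c$ small enough forces $\Psi$ into $X_I$. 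The contraction property follows from the pointwise bound $||a|^N a - |b|^N b| \les (|a|^N + |b|^N)|a-b|$ together with H\"older's inequality in $L^{(N+2)/(N+1)}$. For $N > 4$ the exponent $-(2N-2)/(N+2)$ is strictly less than $-1$, so $\int_0^\infty (R+s)^{-(2N-2)/(N+2)}\dd s \les R^{(4-N)/(N+2)}$ converges, and the smallness hypothesis (\ref{LNsmall}) globalizes the same argument without modification.

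Continuous dependence on initial data comes from subtracting the Duhamel identities for $u$ and $\tilde u$ and re-running the same bilinear estimate on the difference. Persistence of $\dot H^1 \cap L^\infty$ regularity and of $\langle x \rangle^{-1}L^\infty$ decay follows by enriching the fixed-point space with those norms and repeating the corresponding estimates from Proposition \ref{local_existence}, using Lemma \ref{bounds}, Lemma \ref{improved}, and the weighted outgoing bound (\ref{linfw}); since the $L^{N+2}$ norm of $u$ is already controlled on $I$, the extra estimates close by choosing the constants in the fixed-point setup (or, in the global regime, by the same smallness that globalizes the $L^{N+2}$ argument). The main technical obstacle is extracting exactly the right $R$-gain from the nonlocal operator $r^{-1}\int_0^r \rho\cdot\dd\rho$ at the scaling-critical $L^{N+2}$ level: it is this single estimate that forces $N > 2$ for local existence and $N > 4$ for small-data global existence, and that makes the support hypothesis $\supp u_0 \subset \ov{\R^3 \setminus B(0, R)}$ indispensable.
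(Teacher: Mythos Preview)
Your proposal is correct and follows essentially the same approach as the paper: a contraction in $L^\infty_t L^{N+2}_x$ using Proposition \ref{est_outgoing} for the outgoing Duhamel term and Lemma \ref{awayfromzero} with $p=(N+2)/(N+1)$, $q=N+2$ to extract the $R$-gain, with the same threshold $N>2$ locally and $N>4$ globally. One minor difference: for the persistence of $L^\infty$, $\dot H^1$, and $\langle x\rangle^{-1}L^\infty$, the paper does not enrich the fixed-point space but proves these a posteriori on the already-constructed $L^{N+2}$ solution, and for the $L^\infty$ bound it again invokes Lemma \ref{awayfromzero} (with $q=\infty$) rather than Lemma \ref{improved}; your route via the Proposition \ref{local_existence} estimates also works once $u\in L^{N+2}\cap L^\infty$ is available.
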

By standard arguments (i.e.\ dominated convergence, see \cite{becsof}), one can show that in fact when $N>4$ $\|u(t)\|_{L^{N+2}} \to 0$ as $t \to \infty$.

Again, due to the outgoing nature of the equation, if the initial data are supported on $\ov{\R^3 \setminus B(0, R)}$ for some $R \geq 0$, then $u(t)$ is supported on $\ov{\R^3 \setminus B(0, R+t)}$ for $t \in I$. This is true for the solution $u$ because it is true for all the iterates in the fixed point argument.
\begin{proof}[Proof of Proposition \ref{LNexistence}] The proof is based on a fixed point argument. Consider the linearized version (\ref{eq_linearized}) of equation (\ref{eq_out})
$$\begin{aligned}
&u(t) = \cos(t\sqrt{-\Delta}) u_0 + \frac {\sin(t\sqrt{-\Delta})}{\sqrt{-\Delta}} u_1 - \frac 1 2 \int_0^t \cos((t-s)\sqrt{-\Delta}) \\
&\Big(\frac 1 r \int_0^r \rho |\tilde u(\rho, s)|^N \tilde u(\rho, s) \dd \rho\Big) \dd s + \frac 1 2 \int_0^t \frac {\sin((t-s)\sqrt{-\Delta})}{\sqrt{-\Delta}} |\tilde u(s)|^N\tilde u(s) \dd s.
\end{aligned}$$
Due to Proposition \ref{est_outgoing}, it immediately follows that
$$
\|u\|_{L^\infty_t L^{N+2}_x (\R^3 \times I)} \les \|u_0\|_{L^{N+2}} + T \Big\|\frac 1 r \int_0^r \rho |\tilde u(\rho, s)|^N \tilde u(\rho, s) \dd \rho\Big\|_{L^\infty_s L^{N+2}_x (\R^3 \times I)}.
$$

Assume that $\tilde u$ (and hence $u$) is supported on $\ov{\R^3 \setminus B(0, R)}$ as well for all times $t \in I$. By Lemma \ref{awayfromzero}, we have that when $N+2 > \frac {3(N+2)}{N+1}$, i.e. when $N>2$,
$$\begin{aligned}
\Big\|\frac 1 r \int_0^r \rho |\tilde u(\rho, s)|^N \tilde u(\rho, s) \dd \rho\Big\|_{L^{N+2}_x} &\les R^{1-3\frac{N+1}{N+2}+\frac 3 {N+2}} \||\tilde u(s)|^N \tilde u(s)\|_{L^{\frac {N+2}{N+1}}} \\
&= R^{\frac{2-2N}{N+2}} \|\tilde u(s)\|_{L^{N+2}}^{N+1}.
\end{aligned}$$

Consequently
$$
\|u\|_{L^\infty_t L^{N+2}_x(\R^3 \times I)} \les \|u_0\|_{L^{N+2}} + T R^{\frac{2-2N}{N+2}} \|\tilde u\|_{L^\infty_t L^{N+2}_x (\R^3 \times I)}^{N+1}.
$$
If $\|\tilde u\|_{L^\infty_t L^{N+2}_x (\R^3 \times I)} \les \|u_0\|_{L^{N+2}}$ and $T \leq c R^{\frac{2N-2}{N+2}} \|u_0\|_{L^{N+2}}^{-N}$ with $c$ sufficiently small, then we retrieve the same conclusion for $u$: $\|u\|_{L^\infty_t L^{N+2}_x (\R^3 \times I)} \les \|u_0\|_{L^{N+2}}$.

In a similar manner one can prove that for two pairs $u^1$ and $\tilde u^1$, respectively $u^2$ and $\tilde u^2$, both satisfying (\ref{eq_linearized}),
$$\begin{aligned}
\|u^1-u^2\|_{L^\infty_t L^{N+2}_x(\R^3 \times I)} &\les T R^{\frac{2-2N}{N+2}} \|\tilde u^1-\tilde u^2\|_{L^\infty_t L^{N+2}_x (\R^3 \times I)} (\|\tilde u^1\|_{L^\infty_t L^{N+2}_x (\R^3 \times I)}^N + \\
&+ \|\tilde u^2\|_{L^\infty_t L^{N+2}_x (\R^3 \times I)}^N).
\end{aligned}$$
Thus the mapping $\tilde u \mapsto u$ is a contraction under the same condition on $T$, $T \leq c R^{\frac{2N-2}{N+2}} \|u_0\|_{L^{N+2}}^{-N}$, on the ball $\{u \mid \|u\|_{L^\infty_t L^{N+2}_x (\R^3 \times I)} \les \|u_0\|_{L^{N+2}}\}$. The fixed point $u$ is a solution of (\ref{eq_outgoing}) with the desired properties.

In the same manner we can show that, if $u$ and $\tilde u$ are two solutions with initial data $u_0$ and $\tilde u_0$, then
$$\begin{aligned}
\|\tilde u-u\|_{L^\infty_t L^{N+2}_x(\R^3 \times I)} \les &\|\tilde u_0-u_0\|_{L^{N+2}} + T R^{\frac{2-2N}{N+2}} \|\tilde u^1-\tilde u^2\|_{L^\infty_t L^{N+2}_x (\R^3 \times I)} \\
&(\|\tilde u^1\|_{L^\infty_t L^{N+2}_x (\R^3 \times I)}^N + \|\tilde u^2\|_{L^\infty_t L^{N+2}_x (\R^3 \times I)}^N).
\end{aligned}$$
This proves the continuous dependence of the solution on the initial data.

Next, assume that the initial data $u_0 \in L^\infty$. Then, by (\ref{linf}) and Lemma \ref{awayfromzero}, since $\frac {N+2}{N+1} \leq 2$,
$$\begin{aligned}
\|u\|_{L^\infty_{t, x}(\R^3 \times I)} &\les \|u_0\|_{L^\infty} + T \Big\|\frac 1 r \int_0^r \rho |u(\rho, s)|^N u(\rho, s) \dd \rho\Big\|_{L^\infty_{s, x}(\R^3 \times I)} \\
&\les \|u_0\|_{L^\infty} + T R^{1-3\frac {N+1}{N+2}} \||u(s)|^Nu(s)\|_{L^\infty_s L^{\frac {N+2}{N+1}}_x (\R^3 \times I)} \\
&\les \|u_0\|_{L^\infty} + T R^{-\frac{2N+1}{N+2}} \|u\|^{N+1}_{L^\infty_s L^{N+2}_x(\R^3 \times I)} \\
&\les \|u_0\|_{L^\infty} + R^{-\frac 3 {N+2}} \|u_0\|_{L^{N+2}} < \infty.
\end{aligned}$$
Assume in addition that $u_0 \in \dot H^1$. Then, since $2(N+1) \geq N+2$,
$$\begin{aligned}
\|u\|_{L^\infty_t \dot H^1_x(\R^3 \times I)} &\les \|u_0\|_{\dot H^1} + \|u_1\|_{L^2} + T\Big(\Big\|\frac 1 r \int_0^r \rho |u(\rho, s)|^N u(\rho, s) \dd \rho\Big\|_{L^\infty_s \dot H^1_x(\R^3 \times I)} + \\
&+ \||u(s)|^Nu(s)\|_{L^\infty_s L^2_x(\R^3 \times I)}\Big) \\
&\les \|u_0\|_{\dot H^1} + \||u(s)|^Nu(s)\|_{L^\infty_s L^2_x(\R^3 \times I)} \\
&\les \|u_0\|_{\dot H^1} + \|u\|^{N+1}_{L^\infty_t L^{2(N+1)}_x(\R^3 \times I)} < \infty.
\end{aligned}$$
Finally, assume that $u_0 \in \langle x \rangle^{-1} L^\infty$. Then we already know that $u \in L^\infty_{t, x}(\R^3 \times I)$ and by (\ref{linfw}), since for $N>1$ $\frac {N+2}{N+1}<\frac 3 2$,
$$\begin{aligned}
\|u\|_{|x|^{-1}L^\infty_{t, x}} &\les \|u_0\|_{|x|^{-1}L^\infty} + T \Big\|\frac 1 r \int_0^r \rho |u(\rho, s)|^N u(\rho, s) \dd \rho\Big\|_{|x|^{-1} L^\infty_{t, x}(\R^3 \times I)} \\
&\les \|u_0\|_{|x|^{-1}L^\infty} + T\Big\|\frac {|u(s)|^Nu(s)}{|x|}\Big\|_{L^\infty_s L^1_x(\R^3 \times I)} \\
&\les \|u_0\|_{|x|^{-1}L^\infty} + \|u\|_{L^\infty_t L^{N+2}_x(\R^3 \times I) \cap L^\infty_{t, x}(\R^3 \times I)}^{N+1}.
\end{aligned}$$

In the small initial data case, we similarly see that
$$
\|u\|_{L^\infty_t L^{N+2}_x} \les \|u_0\|_{L^{N+2}} + \Big\|\frac 1 r \int_0^r \rho |\tilde u(\rho, s)|^N \tilde u(\rho, s) \dd \rho\Big\|_{L^1_s L^{N+2}_x}.
$$
We then take advantage of the fact that, due to the outgoing nature of the equation, we may assume that $\supp \tilde u(s) \subset \ov{\R^3 \setminus B(0, R+s)}$ (and hence same for $u$). Therefore we obtain as above that
$$
\Big\|\frac 1 r \int_0^r \rho |\tilde u(\rho, s)|^N \tilde u(\rho, s) \dd \rho\Big\|_{L^{N+2}_x} \les (R+s)^{\frac{2-2N}{N+2}} \|\tilde u(s)\|_{L^{N+2}}^{N+1}.
$$
For $\frac {2-2N}{N+2}<-1$, i.e. $N>4$, $\int_0^\infty (R+s)^{\frac{2-2N}{N+2}} \dd s = C R^{\frac {4-N}{N+2}} < \infty$. Thus
$$
\Big\|\frac 1 r \int_0^r \rho |\tilde u(\rho, s)|^N \tilde u(\rho, s) \dd \rho\Big\|_{L^1_s L^{N+2}_x} \les R^{\frac{4-N}{N+2}} \|\tilde u\|_{L^\infty_t L^{N+2}_x}^{N+1}.
$$
In conclusion
$$
\|u\|_{L^\infty_t L^{N+2}_x} \les \|u_0\|_{L^{N+2}} + R^{\frac{4-N}{N+2}} \|\tilde u\|_{L^\infty_t L^{N+2}_x}^{N+1}.
$$
Thus, assuming that $\|\tilde u\|_{L^\infty_t L^{N+2}_x} \les \|u_0\|_{L^{N+2}}$ and that $R^{\frac {4-N}{N+2}} \|u_0\|_{L^{N+2}}^N << 1$, we obtain the same conclusion for $u$. In a similar manner one shows that the mapping $\tilde u \mapsto u$ is a contraction and its fixed point is a solution $u$ to (\ref{eq_outgoing}) with the desired properties. Also in a similar manner one proves the continuous dependence of the solution on the initial data:
$$
\|\tilde u - u\|_{L^\infty_t L^{N+2}_x} \les \|\tilde u_0-u_0\|_{L^{N+2}} + R^{\frac{4-N}{N+2}} \|\tilde u - u\|_{L^\infty_t L^{N+2}_x} (\|\tilde u\|_{L^\infty_t L^{N+2}_x}^N + \|u\|_{L^\infty_t L^{N+2}_x}^N).
$$

Next, assume that $u_0 \in L^\infty$. Then
$$
\|u\|_{L^\infty_{t, x}} \les \|u_0\|_{L^\infty} + \Big\|\frac 1 r \int_0^r \rho |u(\rho, s)|^N u(\rho, s) \dd \rho\Big\|_{L^1_s L^\infty_x}.
$$
In the same manner as in the large data case we now obtain that
$$
\Big\|\frac 1 r \int_0^r \rho |u(\rho, s)|^N u(\rho, s) \dd \rho\Big\|_{L^\infty_x} \les (R+s)^{-\frac {2N+1}{N+2}} \|u(s)\|_{L^{N+2}}^{N+1}.
$$
For $-\frac {2N+1}{N+2}<-1$, i.e.\ $N>1$, $\int_0^\infty (R+s)^{-\frac {2N+1}{N+2}} \dd s = C R^{\frac {1-N}{N+2}} < \infty$. Therefore
$$
\|u\|_{L^\infty_{t, x}} \les \|u_0\|_{L^\infty} + R^{\frac {1-N}{N+2}} \|u\|_{L^\infty_t L^{N+2}_x}^{N+1} < \infty.
$$
Knowing this, it is easy to prove the persistence of regularity and of decay, albeit possibly with linear growth in the norms.
\end{proof}

\section{Conservation laws}\lb{conservation_laws}
We now state some conservation laws for equation (\ref{eq_outgoing}). The first refers to the conservation of the $L^{N+2}$ norm.

This is an a priori estimate. We cannot assume compact support of the solution due to the infinite speed of propagation; the best we can do is $\langle x \rangle^{-1}$ decay. The conditions in the statement are sufficient for all the integrals in the proof to be well-defined. In particular, we need to assume that $N>1$.

\begin{proposition}\lb{Nconserv} Suppose that $N>1$, that $u$ fulfills equation (\ref{eq_outgoing}) on $\R^3 \times I$, $I=[0, T]$, that $(u_0, u_1) \in \dot H^1 \times L^2$, and that uniformly for each $t \in [0, T]$ $u(t) \in \dot H^1 \cap \langle x \rangle^{-1} L^\infty$ and $u_t(t) \in L^2$. Then
\be\lb{conserv}\begin{aligned}
&\int_{\R^3 \times \{T\}} |u|^{N+2} \dd x = \int_{\R^3} |u_0|^{N+2} \dd x - N \int_0^T \int_{\R^3} \frac {|u|^{N+2}}{|x|} \dd x \dd t - \\
&\frac {N+2}{16\pi} \int_0^T \Big(\int_{\R^3} \frac {|u|^N u}{|x|} \dd x\Big)^2 \dd t + (N+2) \int_0^T \int_{\R^3} |u|^N u ((\partial_r+\frac 1 r)\Phi_0(t)(u_0, u_1)+ \\
&+\Phi_1(t)(u_0, u_1)) \dd x.
\end{aligned}\ee
In particular, if the initial data $(u_0, u_1)$ are outgoing, then
$$\begin{aligned}
\int_{\R^3 \times \{T\}} |u|^{N+2} \dd x =& \int_{\R^3} |u_0|^{N+2} \dd x - N \int_0^T \int_{\R^3} \frac {|u|^{N+2}}{|x|} \dd x \dd t - \\
&\frac {N+2}{16\pi} \int_0^T \Big(\int_{\R^3} \frac {|u|^N u}{|x|} \dd x\Big)^2 \dd t,
\end{aligned}$$
so
\be\lb{ln}
\|u\|_{L^\infty_t L^{N+2}_x(\R^3 \times [0, T])} \leq \|u_0\|_{L^{N+2}},\ \int_0^T \int_{\R^3} \frac {|u|^{N+2}}{|x|} \dd x \dd t \les \|u_0\|_{L^{N+2}}^{N+2}.
\ee
\end{proposition}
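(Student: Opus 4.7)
The strategy is to differentiate $\int |u|^{N+2}\dd x$ in $t$ using the first-order formulation (\ref{eqn}), and to identify the resulting integral as a sum of three pieces corresponding to the three non-initial terms on the left-hand side of (\ref{eqn}) (the transport part $u_r + u/r$, the nonlocal term $\frac 1 {2r}\int_0^r \rho|u|^Nu$, and the forcing term $(\partial_r+\frac 1 r)\Phi_0+\Phi_1$). Each piece will be rewritten via radial integration by parts.

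First I would compute
$$
\frac{d}{dt}\int_{\R^3}|u|^{N+2}\dd x = (N+2)\int_{\R^3} |u|^N u\, u_t\dd x,
$$
substitute $u_t$ from (\ref{eqn}), and handle the three resulting pieces separately. For the transport piece, the identity (\ref{parts}) with $n=N+2$ gives
$$
\int_{\R^3} |u|^N u\,(u_r+u/r)\dd x = \frac{N}{N+2}\int_{\R^3} \frac{|u|^{N+2}}{|x|}\dd x,
$$
which produces the coefficient $-N$ in front of the first space-time integral. For the nonlocal piece, I would set $g(\rho)=|u(\rho,t)|^Nu(\rho,t)$ and $h(r)=\int_0^r \rho g(\rho)\dd\rho$, so that $h'(r)=rg(r)$, and then observe
$$
\int_{\R^3} g(r)\Big(\frac 1 r \int_0^r\rho g(\rho)\dd\rho\Big)\dd x = 4\pi \int_0^\infty h'(r)h(r)\dd r = 2\pi\, h(\infty)^2 = \frac 1 {8\pi}\Big(\int_{\R^3}\frac{|u|^N u}{|x|}\dd x\Big)^2,
$$
using $h(\infty)=\frac 1{4\pi}\int |u|^N u/|x|\dd x$. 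Multiplying by $-\frac{N+2}{2}$ yields the second term in (\ref{conserv}). The third piece is kept as-is. Integrating in $t$ from $0$ to $T$ produces (\ref{conserv}).

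For the second conclusion, I would note that if $(u_0,u_1)$ is outgoing then $P_-\Phi(t)(u_0,u_1)=0$ for $t\geq 0$; the second component of this identity is exactly
$$
(\partial_r+\tfrac 1 r)\Phi_0(t)(u_0,u_1)+\Phi_1(t)(u_0,u_1)=0,
$$
so the forcing term in (\ref{conserv}) vanishes identically. Since the remaining two error terms are manifestly nonpositive, (\ref{ln}) follows immediately: the first bound from dropping both negative terms, the second from rearranging to $N\int_0^T\!\int |u|^{N+2}/|x|\dd x\dd t \leq \|u_0\|_{L^{N+2}}^{N+2}$.

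The main technical obstacle is justifying the radial integrations by parts (in particular (\ref{parts}) and the telescoping for $h$) without compact support. The hypothesis $u(t)\in \dot H^1\cap \langle x\rangle^{-1}L^\infty$ combined with $N>1$ ensures that $|u|^{N+2}/|x|$, $|u|^Nu/|x|$, and $|u|^N u\cdot F$ are all integrable, and that $|u(r,t)|^{N+2} r^2\to 0$ as $r\to \infty$ (via a cutoff and $r^2|u|^{N+2}\les r^{-N}$ from the pointwise decay), killing the boundary contribution at infinity; the boundary at $r=0$ vanishes because $r^2\to 0$. A routine density/approximation argument (e.g.\ truncating $u$ by a smooth radial cutoff, performing the integration by parts, and passing to the limit using the above bounds) makes everything rigorous. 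The differentiability in $t$ needed to write $\frac d{dt}\int |u|^{N+2}\dd x=(N+2)\int |u|^N u\, u_t\dd x$ follows from the regularity hypotheses $(u_0,u_1)\in \dot H^1\times L^2$ and $u_t(t)\in L^2$ together with $u(t)\in\langle x\rangle^{-1}L^\infty$, which makes $|u|^N u\in L^2$ and allows dominated convergence.
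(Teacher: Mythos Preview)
Your proposal is correct and follows essentially the same route as the paper: differentiate $\int |u|^{N+2}\dd x$, substitute $u_t$ from the first-order formulation (\ref{eqn}), evaluate the transport piece via (\ref{parts}) and the nonlocal piece via the perfect-derivative identity $4\pi\int h'h\,dr=2\pi h(\infty)^2$, and finally observe that the forcing term is $2P_{1-}\Phi(t)(u_0,u_1)$, which vanishes for outgoing data. Your grouping of $u_r+u/r$ into a single transport term is a minor cosmetic difference, and your explicit discussion of why the boundary terms vanish under the hypothesis $u\in\langle x\rangle^{-1}L^\infty$ with $N>1$ is a welcome addition that the paper leaves implicit.
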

This estimate does not seem so useful because all the quantities involved are subcritical. However, as we saw above, if the solution is supported away from zero then the $L^{N+2}$ norm can be used to control it.
\begin{proof}[Proof of Proposition \ref{Nconserv}] As shown in Lemma \ref{lemma_firstorder}, (\ref{eq_outgoing}) is equivalent to (\ref{eqn}). Therefore
$$\begin{aligned}
&\frac d {dt} \int_{\R^3 \times \{t\}} |u|^{N+2} \dd x = (N+2) \int_{\R^3 \times \{t\}} |u|^Nuu_t \dd x \\
&= - (N+2)\int_{\R^3 \times \{t\}} |u|^Nuu_r + \frac {|u|^{N+2}}{|x|} + |u|^N u \frac 1 {2r} \Big(\int_0^r \rho |u(\rho)|^N u(\rho) \dd \rho\Big) \dd x + \\
&+ (N+2) \int_{\R^3 \times \{t\}} |u|^N u ((\partial_r+\frac 1 r)\Phi_0(t)(u_0, u_1)+\Phi_1(t)(u_0, u_1)) \dd x.
\end{aligned}$$
Furthermore, integrating by parts (see (\ref{parts})) we obtain that
$$
\int_{\R^3 \times\{t\}} (N+2) |u|^N u u_r \dd x = -2\int_{\R^3\times\{t\}} \frac {|u|^{N+2}}{|x|} \dd x.
$$
Also
$$\begin{aligned}
&\int_{\R^3 \times \{t\}} |u|^N u \frac 1 {2r} \Big(\int_0^r \rho |u(\rho)|^N u(\rho) \dd \rho\Big) \dd x = \\
&=2\pi \int_0^\infty r |u|^N u \Big(\int_0^r \rho |u(\rho)|^N u(\rho) \dd \rho\Big) \dd r \\
&= \pi \Big(\int_0^r \rho |u(\rho)|^N u(\rho) \dd \rho\Big)^2 \mid_0^\infty = \pi \Big(\int_0^\infty r |u(r)|^N u(r) \dd r\Big)^2 \\
&= \frac 1 {16 \pi} \Big(\int_{\R^3 \times \{t\}} \frac {|u|^N u}{|x|} \dd x\Big)^2.
\end{aligned}$$
Integrating from $0$ to $T$ we retrieve (\ref{conserv}).
\end{proof}

Next, we study the conservation of energy for (\ref{eq_outgoing}). For a solution $(u, v)$ of the equivalent system (\ref{eq_outgoing'}), let
$$
E_0[u(t)]:=\int_{\R^3 \times \{t\}} |\dl u|^2 + v^2 \dd x.
$$

Again, this is an a priori estimate and the conditions in its statement are sufficient for all the integrals that appear in the proof to be finite. Note again that we cannot assume that the solution has compact support due to the infinite speed of propagation. Consequently, we need that $N>1$. 

\begin{proposition}\lb{energy_bounds} Assume that $N>1$ and consider a solution $(u, v)$ of the system (\ref{eq_outgoing'}) on $\R^3 \times I$, $I=[0, T]$, such that $(u_0, u_1) \in \dot H^1 \times L^2$ and uniformly for each $t \in [0, T]$ $u(t) \in \dot H^2 \cap \dot H^1 \cap \langle x \rangle^{-1} L^\infty$ and $v \in \dot H^1 \cap L^2$. Then
\be\lb{e0}\begin{aligned}
E_0[u(T)]&=E_0[u(0)] - \frac {2N}{N+2} \int_0^T \int_{\R^3} \frac {|u|^{N+2}}{|x|} \dd x \dd t + \\
&+ \int_{\R^3 \times \{t\}} ((\partial_r+\frac 1 r) \Phi_0(t)(u_0, u_1) + \Phi_1(u_0, u_1)) |u|^N u \dd x.
\end{aligned}\ee
Moreover, if the initial data $(u_0, u_1)$ are outgoing,
\be\lb{e0'}
E_0[u(T)]=E_0[u(0)] - \frac {2N}{N+2} \int_0^T \int_{\R^3} \frac {|u|^{N+2}}{|x|} \dd x \dd t.
\ee
\end{proposition}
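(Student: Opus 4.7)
The plan is to differentiate $E_0[u(t)]$ directly using the system (\ref{eq_outgoing'}) and reduce everything to the identity already used in Proposition \ref{Nconserv}. Setting $w(r,t) := \frac{1}{r}\int_0^r \rho |u(\rho,t)|^N u(\rho,t)\,d\rho$, so that $u_t = v - w/2$ and $v_t = \Delta u + \tfrac12|u|^N u$, I would integrate by parts in $\int \nabla u \cdot \nabla u_t\,dx$ and compute
\begin{equation*}
\frac{d}{dt} E_0[u(t)] = -2\int \Delta u\cdot u_t\,dx + 2\int v v_t\,dx = \int \Delta u\cdot w\,dx + \int v\,|u|^N u\,dx,
\end{equation*}
the two copies of $\int v\Delta u\,dx$ cancelling. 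The hypotheses $u(t)\in \dot H^2\cap \dot H^1\cap \langle x\rangle^{-1}L^\infty$ and $v(t)\in \dot H^1\cap L^2$ are exactly what is needed to justify the spatial integration by parts and to kill boundary terms at $r=0$ and $r=\infty$.

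The core computation is the $\int \Delta u\cdot w\,dx$ piece. Another integration by parts turns this into $-\int \nabla u\cdot \nabla w\,dx = -4\pi\int_0^\infty u_r w_r r^2\,dr$. Differentiating $rw(r) = \int_0^r \rho|u|^N u\,d\rho$ yields $w_r = |u|^N u - w/r$, so the integral splits as $-4\pi\int u_r\,|u|^N u\,r^2\,dr + 4\pi\int u_r w\,r\,dr$. The first term equals $\frac{2}{N+2}\int |u|^{N+2}/|x|\,dx$ by the same integration by parts in $r$ used in (\ref{parts}); for the second, writing $rw(r) = \int_0^r \rho|u|^N u\,d\rho$ and swapping the order of integration by Fubini (with $u(\infty)=0$ from the weighted decay) collapses it to $-\int |u|^{N+2}/|x|\,dx$. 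Adding,
\begin{equation*}
\int \Delta u\cdot w\,dx = -\frac{N}{N+2}\int \frac{|u|^{N+2}}{|x|}\,dx.
\end{equation*}

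For the second term I would split $v = u_t + w/2$, obtaining
\begin{equation*}
\int v\,|u|^N u\,dx = \frac{1}{N+2}\frac{d}{dt}\int |u|^{N+2}\,dx + \frac{1}{2}\int w\,|u|^N u\,dx,
\end{equation*}
and then substitute the identity proved inside Proposition \ref{Nconserv},
\begin{equation*}
\frac{d}{dt}\int |u|^{N+2}\,dx = -N\int \frac{|u|^{N+2}}{|x|}\,dx - \frac{N+2}{2}\int w\,|u|^N u\,dx + (N+2)\int F\,|u|^N u\,dx,
\end{equation*}
where $F:=(\partial_r + \tfrac1r)\Phi_0(t)(u_0,u_1) + \Phi_1(t)(u_0,u_1)$. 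The two $\int w\,|u|^N u$ contributions then cancel exactly, leaving $\int v\,|u|^N u\,dx = -\frac{N}{N+2}\int |u|^{N+2}/|x|\,dx + \int F\,|u|^N u\,dx$. Combining with the previous step and integrating in $t$ from $0$ to $T$ produces (\ref{e0}); in the outgoing case $F\equiv 0$ by Lemma \ref{lemma_firstorder}, which gives (\ref{e0'}). The main obstacle is bookkeeping: the nonlocal operator $w$ has to be kept in $\dot H^1$ (using Lemma \ref{bounds}) so that $\int \Delta u\cdot w\,dx$ is well-defined before the integration by parts, and the $\langle x\rangle^{-1}L^\infty$ decay together with the $\dot H^1\cap L^2$ hypotheses must be used to make every boundary term at $r=0,\infty$ vanish in the Fubini and IBP steps.
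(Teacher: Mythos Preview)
Your argument is correct and follows essentially the same route as the paper: both start from $\frac{d}{dt}E_0$, cancel the $\int v\Delta u$ terms, expand $w_r=|u|^Nu-w/r$, and use the radial integration-by-parts identity (\ref{parts}) to evaluate the pieces. The only organizational difference is that you invoke the differentiated form of Proposition~\ref{Nconserv} to handle $\int u_t|u|^Nu$ and obtain the cancellation of the $\int w\,|u|^Nu$ terms, whereas the paper substitutes equation~(\ref{eqn}) for $u_t$ directly at that point---but since Proposition~\ref{Nconserv} is itself derived from~(\ref{eqn}), this is the same computation packaged slightly differently.
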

This shows that energy decreases with time for outgoing initial data.

Note that all computations can be justified under the weaker assumption that $u(t) \in \dot H^1_{rad} \cap L^\infty$ for each $t \in [0, T]$.
\begin{proof}[Proof of Proposition \ref{energy_bounds}]
We start from
$$\begin{aligned}
&\frac d {dt} \frac 1 2 \int_{\R^3 \times \{t\}} |\dl u|^2 + v^2 \dd x = \int_{\R^3 \times \{t\}} \dl u \cdot \dl u_t + v v_t \\
&= \int_{\R^3 \times \{t\}} \dl u \cdot \dl v - \dl u \cdot \dl \Big( \frac 1 {2r} \int_0^r \rho |u(\rho)|^N u(\rho) \dd \rho\Big) + v \Delta u + v \frac 1 2 |u|^N u \dd x \\
&= \int_{\R^3 \times \{t\}} \Big(-\frac 1 2 u_r |u|^N u\Big) + \frac 1 {2r^2} u_r \Big(\int_0^r \rho |u(\rho)|^N u(\rho) \dd \rho\Big) + \frac 1 2 u_t |u|^N u + \\
&+ \frac 1 {4r} \Big(\int_0^r \rho |u(\rho)|^N u(\rho) \dd \rho\Big) |u|^N u \dd x,
\end{aligned}$$
in view of the fact that
$$
\int_{\R^3 \times \{t\}} \dl u \cdot \dl v + v \Delta u \dd x = 0.
$$
We next look at each term individually.
$$
\begin{aligned}
&\int_{\R^3 \times\{t\}} \frac 1 2 u_r |u|^N u \dd x = 4\pi \int_0^\infty \frac 1 2 u_r |u|^N u r^2 \dd r \\
&= 4\pi \frac {|u|^{N+2}} {2(N+2)} r^2 \mid_0^\infty - 4\pi \int_0^\infty \frac {|u|^{N+2}}{N+2} r \dd r = -\frac 1 {N+2} \int_{\R^3\times \{t\}} \frac {|u|^{N+2}}{|x|} \dd x.
\end{aligned}$$
Then
$$
\begin{aligned}
&\int_{\R^3 \times\{t\}} \frac 1 {2r^2} u_r\Big(\int_0^r \rho |u(\rho)|^N u(\rho) \dd \rho \Big) \dd x = 4\pi \int_0^\infty \frac 1 2 u_r \Big(\int_0^r \rho |u(\rho)|^N u(\rho) \dd \rho \Big) \dd r \\
&= 2\pi u \Big(\int_0^r \rho |u(\rho)|^N u(\rho) \dd \rho \Big) \mid_0^\infty - 4\pi \int_0^\infty \frac 1 2 u r|u|^N u \dd r = -\int_{\R^3 \times \{t\}} \frac {|u|^{N+2}}{2|x|} \dd x.
\end{aligned}$$
Also
\be\lb{trei}
\int_{\R^3 \times \{t\}} \frac 1 2 u_t |u|^N u \dd x = \frac d {dt} \int_{\R^3 \times \{t\}} \frac{|u|^{N+2}}{2(N+2)} \dd x.
\ee
Finally,
\be\lb{patru}\begin{aligned}
&\int_{\R^3 \times \{t\}} \frac 1 {4r} \Big(\int_0^r \rho |u(\rho)|^N u(\rho) \dd \rho\Big) |u|^N u \dd x = 4\pi \int_0^\infty \frac 1 4 r |u|^N u \Big(\int_0^r \rho |u(\rho)|^N u(\rho)\Big) \dd \rho \\
&= \frac \pi 2 \Big(\int_0^r \rho |u(\rho)|^N u(\rho) \dd \rho\Big)^2 \mid_0^\infty = \frac \pi 2 \Big(\int_0^\infty r |u(r)|^N u(r) \dd r\Big)^2 = \frac 1 {32\pi} \Big(\int_{\R^3} \frac{|u|^N u}{|x|} \dd x\Big)^2.
\end{aligned}\ee
Therefore
$$\begin{aligned}
\frac d {dt} \frac 1 2 \int_{\R^3 \times \{t\}} |\dl u|^2 + v^2 \dd x & = - \frac N {2(N+2)} \int_{\R^3 \times \{t\}} \frac {|u|^{N+2}}{|x|} \dd x + \\
&+\frac d {dt} \int_{\R^3 \times \{t\}} \frac {|u|^{N+2}}{2(N+2)} \dd x + \frac 1 {32\pi} \Big(\int_{\R^3} \frac {|u|^N u}{|x|} \dd x\Big)^2.
\end{aligned}$$
Integrating from $0$ to $T$ we obtain an energy identity, but not the one we are looking for. 
For that, we use a different estimate instead of (\ref{trei}): 
by~(\ref{eqn})
$$
u_t+u_r+\frac u r+\frac 1 {2r} \Big(\int_0^r \rho |u(\rho)|^N u(\rho)\dd \rho\Big)=(\partial_r+\frac 1 r) \Phi_0(t)(u_0, u_1) + \Phi_1(u_0, u_1).
$$
The term (\ref{trei}) then becomes (see the computation (\ref{parts}))
$$\begin{aligned}
&\int_{\R^3\times \{t\}} \frac 1 2 u_t |u|^N u \dd x = - \frac N {2(N+2)} \int_{\R^3 \times \{t\}} \frac {|u|^{N+2}}{|x|} \dd x - \\
&\int_{\R^3 \times \{t\}} \frac 1 {4r} \Big(\int_0^r \rho |u(\rho)|^N u(\rho)\dd \rho\Big) |u|^N u \dd x + \frac 1 2 \int_{\R^3 \times \{t\}} ((\partial_r+\frac 1 r) \Phi_0(t)(u_0, u_1) + \Phi_1(u_0, u_1)) |u|^N u \dd x.
\end{aligned}$$
Among other things, this exactly cancels (\ref{patru}). In conclusion, by this method we get
$$\begin{aligned}
&\frac d {dt} \frac 1 2 \int_{\R^3 \times \{t\}} |\dl u|^2 + v^2 \dd x = - \frac N{N+2} \int_{\R^3 \times \{t\}} \frac {|u|^{N+2}}{|x|} \dd x + \\
&+ \frac 1 2 \int_{\R^3 \times \{t\}} ((\partial_r+\frac 1 r) \Phi_0(t)(u_0, u_1) + \Phi_1(u_0, u_1)) |u|^N u \dd x.
\end{aligned}$$
Integrating from $0$ to $T$ we obtain (\ref{e0}).
\end{proof}

\section{Proof of the main results}\lb{proof_main}

\begin{proof}[Proof of Theorem \ref{main_thm}] If the solution exists on some interval $[0, T]$, for all $t \geq T$ we can then rewrite the equation (\ref{eq_outgoing}) as
\be\lb{eq_outT}
u(t)=\Phi_0(t-T)(\tilde u_0, \tilde u_1) + \int_T^t \Phi_0(t-s)P_+(0, |u(s)|^Nu(s)) \dd s,
\ee
where
\be\lb{tildeu}
(\tilde u_0, \tilde u_1):=\Phi(T)(u_0, u_1)+\int_0^T\Phi(T-s)P_+(0, |u(s)|^Nu(s)) \dd s
\ee
are still outgoing (because the flow of the free wave equation, forward in time, preserves the outgoing property). Note that $\tilde u_0=u(T)$, but by taking a $T$ derivative we obtain
$$
u_t(T)=\tilde u_1-\frac 1 {2r}\int_0^r\rho|u(T, \rho)|^Nu(T, \rho) \dd \rho.
$$
Also compare with (\ref{eq_outgoing'}). The outgoing pair $(\tilde u_0, \tilde u_1)$ are, by (\ref{eq_outT}), the new initial data for the equation at time $T$.

By the existence result Proposition \ref{local_existence}, the solution $u$ exists at least locally in time, on some interval $[0, T_0]$ with
$$
T_0 = C \min(\|u_0\|_{\dot H^1}^{-2} \|u_0\|_{L^\infty}^{-N+2}, \|u_0\|_{L^\infty}^{-N}),
$$
and
$$
\|u\|_{L^\infty_t \dot H^1_x(\R^3 \times [0, T_0])} \les \|u_0\|_{\dot H^1},\ \|u\|_{L^\infty_{t, x}(\R^3 \times [0, T_0])} \les \|u_0\|_{L^\infty}.
$$

By approximating the initial data $u_0$ in the $\dot H^1 \cap L^\infty$ norm with $\dot H^1 \cap \dot H^2 \cap \langle x \rangle^{-1} L^\infty$ functions $\ov u_0$, we obtain approximating solutions $\ov u \in L^\infty_t (\dot H^2_x \cap \dot H^1_x \cap \langle x \rangle^{-1} L^\infty_x) (\R^3 \times [0, T_0])$ and such that $\ov v \in L^\infty_t (\dot H^1_x \cap L^2_x)(\R^3 \times [0, T_0])$, where $(\ov u, \ov v)$ satisfy (\ref{eq_outgoing'}). This is also shown in Proposition \ref{local_existence}.


For these smooth and decaying solutions $(\ov u, \ov v)$, Proposition \ref{energy_bounds} implies that energy is conserved and in particular
$$
\|\ov u(T_0)\|_{\dot H^1} \leq E_0[\ov u(T_0)]^{1/2} \leq E_0[\ov u(0)]^{1/2} \les \|\ov u_0\|_{\dot H^1}.
$$
We retrieve the same conclusion for the actual solution $u$ by passing to the limit, due to its continuous dependence on initial data.

Next, assume that the solution $u$ exists on the interval $[0, T_n]$ and has the desired property that
$$
\|u(T_n)\|_{\dot H^1} \les \|u_0\|_{\dot H^1}.
$$
The new initial data at time $T_n$ $(\tilde u_0 \equiv u(T_n), \tilde u_1)$ given by (\ref{tildeu}) are still outgoing, as stated above, and $\|\tilde u_0\|_{\dot H^1} \les \|u_0\|_{\dot H^1}$. 

In addition, due to the outgoing nature of the equation, $\tilde u_0$ and $\tilde u_1$ are supported on $\ov{\R^3 \setminus B(0, T_0)}$, hence by the radial Sobolev embedding
$$
\|\tilde u_0\|_{L^\infty} \les T_n^{-1/2} \|\tilde u_0\|_{\dot H^1}.
$$
By the existence result Proposition \ref{local_existence}, the solution $u$ can then be extended to the interval $[T_n, T_{n+1}=T_n + \delta T]$, where
$$
\delta T = C \min(\|\tilde u_0\|_{\dot H^1}^{-2} \|\tilde u_0\|_{L^\infty}^{-N+2}, \|\tilde u_0\|_{L^\infty}^{-N}) \\
\geq C \min(T_n^{(N-2)/2}, T_n^{N/2}) \|u_0\|_{\dot H^1}^N,
$$
and it has norm
$$
\|u\|_{L^\infty_t \dot H^1_x (\R^3 \times [T_n, T_{n+1}])} \les \|\tilde u_0\|_{\dot H^1},\ \|u\|_{L^\infty_{t, x} (\R^3 \times [T_n, T_{n+1}])} \les \|\tilde u_0\|_{L^\infty}.
$$
Furthermore, by the same approximation argument as above one can prove that $u$ obeys the energy conservation law on $[T_n, T_{n+1}]$ as well, hence
$$
\|u(T_{n+1})\|_{\dot H^1} \leq E_0[u(T_{n+1})]^{1/2} \leq E_0[u(0)]^{1/2} \les \|u_0\|_{\dot H^1}.
$$
This completes the induction step.

At any rate, $\delta T \ges \min(T_n^{(N-2)/2}, T_n^{N/2}) \geq \min(T_0^{(N-2)/2}, T_0^{N/2})$, so $\delta T$ is bounded from below, so $T_n \to \infty$ as $n \to \infty$. This proves the global existence of the solution. Concerning the norms, we see that
$$
\|u\|_{L^\infty_t \dot H^1_x} \les \|u_0\|_{\dot H^1}
$$
and
$$
\|u(t)\|_{L^\infty} \les t^{-1/2} \|u_0\|_{\dot H^1},\ \|u\|_{L^\infty_{t, x}(\R^3 \times [0, T_0=C\min(\|u_0\|_{\dot H^1}^{-2} \|u_0\|_{L^\infty}^{-N+2}, \|u_0\|_{L^\infty}^{-N})])} \les \|u_0\|_{L^\infty}.
$$
In particular, by combining the bounds on $[0, T_0]$ and on $[T_0, \infty)$ we obtain that
$$
\|u\|_{L^\infty_{t, x}} \les \|u_0\|_{L^\infty} + \|u_0\|_{\dot H^1}^{2} \|u_0\|_{L^\infty}^{(N-2)/2} + \|u_0\|_{\dot H^1} \|u_0\|_{L^\infty}^{N/2}.
$$
However, in this case nothing precludes the $L^2_t L^\infty_x$ Strichartz norm from being infinite.

When $T_n$ is sufficiently large and $N > 4$, the initial data $(\tilde u_0, \tilde u_1)$ at time $T_n$ become small in the sense of (\ref{small_data}), because
$$
\|\tilde u_0\|_{\dot H^1}^4 \|\tilde u_0\|_{L^\infty}^{N-4} + \|\tilde u_0\|_{\dot H^1}^2 \|\tilde u_0\|_{L^\infty}^{N-2} \les (T_n^{-(N-4)/2} + T_n^{-(N-2)/2}) \|u_0\|_{\dot H^1}^N.
$$
Therefore the solution $u$ exists globally on $\R^3 \times [T_N, \infty)$, with norm
$$\begin{aligned}
\|u\|_{L^\infty_t \dot H^1_x(\R^3 \times [T_n, \infty)) \cap L^2_t L^\infty_x(\R^3 \times [T_n, \infty))} &\les \|\tilde u_0\|_{\dot H^1} \les \|u_0\|_{\dot H^1},\\
\|u\|_{L^\infty_{t, x}(\R^3 \times [T_n, \infty))} &\les \|\tilde u_0\|_{L^\infty} \les T_n^{-1/2} \|u_0\|_{\dot H^1},
\end{aligned}$$
and we can stop the induction after finitely many steps.

Collecting the bounds we have obtained on each of the three intervals $[0, T_0]$, $[T_0, T_n]$, and $[T_n, \infty)$, we see that
$$\begin{aligned}
\|u\|_{L^2_t L^\infty_x} &\les \min(T_0, 1)^{1/2} \|u\|_{L^\infty_{t, x}(\R^3 \times [0, T_0])} + (\ln T_n - \ln \min(T_0, 1))^{1/2} \|u_0\|_{\dot H^1} + \\
&+ \|u\|_{L^2_t L^\infty_x (\R^3 \times [T_n, \infty))} \\
&\les \|u_0\|_{L^\infty} + (\ln_+ \|u_0\|_{\dot H^1} + \ln_+ \|u_0\|_{L^\infty} + 1)^{1/2} \|u_0\|_{\dot H^1}.
\end{aligned}$$
\end{proof}

\begin{proof}[Proof of Theorem \ref{Nexistence}] This is similar to the proof of Theorem \ref{main_thm}, but with the conservation law Proposition \ref{energy_bounds} replaced by Proposition \ref{Nconserv}.

First, again note that if a solution $u$ is defined on the interval $[0, T]$, then at time $T$ the solution solves the initial value problem (\ref{eq_outgoing}) with outgoing initial data $(\tilde u_0, \tilde u_1)$, where $\tilde u_0=u(T)$ and
\be\lb{u1}
\tilde u_1 = u_t(T) + \frac 1 {2r} \int_0^r \rho |u(T, \rho)|^N u(T, \rho) \dd \rho.
\ee

By the existence result Proposition \ref{LNexistence}, the solution $u$ exists at least on the interval $[0, T_0]$, with
$$
T_0 = C R^{\frac{2N-2}{N+2}}\|u_0\|^{-N}_{L^{N+2}}.
$$

We approximate the initial data $u_0$ by $\ov u_0 \in \dot H^1 \cap \langle x \rangle^{-1} L^\infty$ such that still $\supp \ov u_0 \subset \ov{\R^3 \setminus B(0, R)}$. This gives rise to approximating solutions $\ov u \in L^\infty_t \dot H^1_x(\R^3 \times I) \cap \langle x \rangle^{-1} L^\infty_{t, x}(\R^3 \times I)$.

For these smooth and decaying solutions, the conservation law Proposition \ref{Nconserv} holds, so $\|\ov u\|_{L^\infty_t L^{N+2}_x(\R^3 \times [0, T_0])} \leq \|\ov u_0\|_{L^{N+2}}$. By passing to the limit we also obtain that $\|u\|_{L^\infty_t L^{N+2}_x(\R^3 \times [0, T_0])} \leq \|u_0\|_{L^{N+2}}$.

Suppose that the solution $u$ exists on the interval $[0, T_n]$ and has the desired property that $\|u\|_{L^\infty_t L^{N+2}_x(\R^3 \times [0, T_n])} \leq \|u_0\|_{L^{N+2}}$. Due to the outgoing nature of the equation, $u(T_n)$ is supported on $\ov{\R^3 \setminus B(0, R+T_n)}$.

The new initial data at time $T_n$ $(\tilde u_0=u(T_n), \tilde u_1)$, with $\tilde u_1$ given by (\ref{u1}), then fulfill the conditions of Proposition \ref{LNexistence}. The solution $u$ can be extended to the interval $[T_n, T_{n+1}=T_n+\delta T]$, where
$$
\delta T = C (R+T_n)^{\frac{2N-2}{N+2}} \|u_0\|_{L^{N+2}}^{-N} \geq T_0.
$$
In addition, by the same approximation argument as above one can show that $\|u\|_{L^\infty_tL^{N+2}_x(\R^3 \times [T_n, T_{n+1}])} \leq \|u(T_n)\|_{L^{N+2}} \leq \|u_0\|_{L^{N+2}}$, thus completing the induction step.

Since $T_n \geq n T_0$, this proves the global existence of the solution $u$, satisfying (\ref{uN}), on $\R^3 \times [0, \infty)$.

When $N>4$, for sufficiently large $T_n$ the initial data at time $T_N$ become small in the sense of (\ref{LNsmall}), since $u(T_n)$ is supported on $\ov{\R^3\setminus B(0, R+T_n)}$ and condition (\ref{LNsmall}) becomes $$(R+T_n)^{\frac{4-N}{N+2}}\|u(T_n)\|_{L^{N+2}}^N \leq (R+T_n)^{\frac{4-N}{N+2}}\|u_0\|_{L^{N+2}}^N << 1.
$$

Therefore we can stop after finitely many induction steps and the solution exists globally on the interval $[T_n,\infty)$. One proves by dominated convergence that $\|u(t)\|_{L^{N+2}} \to 0$ as $t \to \infty$.
\end{proof}

\begin{proof}[Proof of Corollary \ref{optimal_existence}] By the local existence result Proposition \ref{well-posed}, the solution $u$ exists on some interval $[0, T]$ with $T>0$ and
$$
\|u\|_{L^\infty_t \dot H^{s_c}_x(\R^3 \times [0, T]) \cap L^{N/2}_t L^\infty_x(\R^3 \times [0, T])} \les \|u_0\|_{\dot H^s}.
$$

By adding the $L^{N+2}$ norm into the fixed point argument, one can show that if $u_0 \in L^{N+2}$ then $\|u\|_{L^\infty_t L^{N+2}_x(\R^3 \times [0, T])} \les \|u_0\|_{L^{N+2}}$. In fact, by approximating the solution $u$ with smooth and decaying solutions one can show that $u$ obeys the conservation law Proposition \ref{Nconserv} on $[0, T]$, so $\|u\|_{L^\infty_t L^{N+2}_x(\R^3 \times [0, T])} \leq \|u_0\|_{L^{N+2}}$.

Due to the outgoing nature of the equation, at time $T$ $u(T)$ is supported on $\ov{\R^3 \setminus B(0, T)}$. Then by Theorem \ref{Nexistence} the solution $u$ exists globally on $[T, \infty)$ and $\|u\|_{L^\infty_t L^{N+2}_x(\R^3 \times [T, \infty))} \leq \|u_0\|_{L^{N+2}}$. The conclusion follows.
\end{proof}

\section*{Acknowledgments}
We would like to thank Tom Spencer for the discussions we had on this topic.

M.B.\ was partially supported by the NSF grant DMS--1128155 and by an AMS--Simons Foundation travel grant.

A.S.\ is partially supported by the NSF grant DMS--1201394.

\end{document}